\providecommand{\U}[1]{\protect\rule{.1in}{.1in}}
\newtheorem{theorem}{Theorem}
\newtheorem{corollary}[theorem]{Corollary}
\newtheorem{definition}[theorem]{Definition}
\newtheorem{lemma}[theorem]{Lemma}
\newtheorem{notation}[theorem]{Notation}
\newtheorem{proposition}[theorem]{Proposition}
\newtheorem{remark}[theorem]{Remark}
\begin{document}

\title{On the Positivity of Trace Class Operators}
\author{Elena Cordero}
\address{Dipartimento di Matematica, Universit\`a di Torino, Dipartimento di
Matematica, via Carlo Alberto 10, 10123 Torino, Italy}
%\curraddr{\"{y} }
\email{elena.cordero@unito.it}
\thanks{}
\author{Maurice de Gosson}
\address{University of Vienna, Faculty of Mathematics,
Oskar-Morgenstern-Platz 1 A-1090 Wien, Austria}
%\curraddr{\"{y} }
\email{maurice.de.gosson@univie.ac.at}
\thanks{}
\author{Fabio Nicola}
\address{Dipartimento di Scienze Matematiche, Politecnico di Torino, corso
Duca degli Abruzzi 24, 10129 Torino, Italy}
%\curraddr{\"{y} }
\email{fabio.nicola@polito.it}
\subjclass[2010]{46E35, 35S05, 81S30, 42C15}
\keywords{Wigner transform, trace class operator, positive operator, Weyl
symbol, Gabor frames}

\maketitle

\begin{abstract}
The characterization of positivity properties of Weyl operators is a
notoriously difficult problem, and not much progress has been made since the
pioneering work of Kastler, Loupias, and Miracle-Sole (KLM). In this paper we
begin by reviewing and giving simpler proofs of some known results for
trace-class Weyl operators; the latter play an essential role in quantum
mechanics. We then apply time-frequency analysis techniques to prove a phase
space version of the KLM condition; the main tools are Gabor frames and the
Wigner formalism. Finally, discrete approximations of the KLM condition, which
are tractable numerically, are provided.

\end{abstract}

\section{Introduction}

The characterization of positivity properties for trace class operators on
$L^{2}(\mathbb{R}^{n})$ is an important topic, not only because it is an
interesting mathematical problem which still is largely open, but also because
of its potential applications to quantum mechanics and even cosmology. It is a
notoriously difficult part of functional analysis which has been tackled by
many authors but there have been few decisive advances since the pioneering
work of Kastler \cite{Kastler} and Loupias and Miracle-Sole
\cite{LouMiracle1,LouMiracle2}; see however Dias and Prata \cite{dipra}. While
some partial results have been obtained in connection with the study of
quantum density operators
\cite{Narcow2,Narcow3,Narcow,Narconnell,Narconnell88} when the operators under
consideration are expressed using the Weyl correspondence, very little is
known about them when they are given in terms of more general correspondences
(in \cite{sriwolf} Srinivas and Wolf give such a condition, but the necessity
statement is false as already noted by Mourgues \textit{et al}. \cite{mofean}%
). It seems in fact that the field, which was quite active in the late 1980s
hasn't much evolved since; the open questions remain open.

We shall tackle the problem using techniques which come from both quantum
mechanics and time-frequency analysis. The phase space representation mainly
employed is the $\eta$-cross-Wigner transform; for $\eta\in\mathbb{R}%
\setminus\{0\}$, this is defined by
\begin{equation}
W_{\eta}(\psi,\phi)(z)=\left(  \tfrac{1}{2\pi\eta}\right)  ^{n}\int%
_{\mathbb{R}^{n}}e^{-\frac{i}{\eta}p\cdot y}\psi(x+\tfrac{1}{2}y)\overline
{\phi(x-\tfrac{1}{2}y)}dy,\label{ww}%
\end{equation}
for $\psi,\phi\in L^{2}(\mathbb{R}^{n}).$ When $\eta=\hbar>0$, ($\hbar$ the Planck constant $h$ divided by $2\pi$) we
recapture the standard cross-Wigner function $W_{\hbar}(\psi,\phi)$, simply
denoted by $W(\psi,\phi)$. Setting $W_{\eta}(\psi,\psi)=W_{\eta}\psi$ and
$\lambda=\eta/\hbar$, we have%
\begin{equation}
W_{\eta}\psi(x,p)=|\lambda|^{-n}W\psi(x,\lambda^{-1}p).\label{scale1}%
\end{equation}
In particular, a change of $\eta$ into $-\eta$ yields%
\begin{equation}
W_{\eta}\psi=(-1)^{n}W_{-\eta}\overline{\psi}.\label{scale2}%
\end{equation}

Given a symbol $a\in\mathcal{S}^{\prime}(\mathbb{R}^{2n})$ (the space of
tempered distribution), the Weyl pseudodifferential operator $\widehat{A}%
_{\eta}^{\mathrm{W}}=\operatorname*{Op}_{\eta}^{\mathrm{W}}(a)$ is weakly
defined by
\begin{equation}
\langle\widehat{A}_{\eta}^{\mathrm{W}}\psi,\overline{\phi}\rangle=\langle
a,W_{\eta}(\psi,\phi)\rangle, \label{w1}%
\end{equation}
for all $\psi,\phi$ in the Schwartz class $\mathcal{S}(\mathbb{R}^{n})$
(Observe that $W_{\eta}(\psi,\phi)\in\mathcal{S}(\mathbb{R}^{2n})$). The
function $a$ is called the $\eta$-Weyl symbol of $\widehat{A}_{\eta
}^{\mathrm{W}}$.

Consider now a trace-class operator $\widehat{A}$ on $L^{2}(\mathbb{R}^{n})$
(see the definition in the subsequent Section \ref{sec2}). Then there exists
an orthonormal basis $(\psi_{j})$ for $L^{2}(\mathbb{R}^{n})$ and a sequence
$(\alpha_{j})\in\ell^{1}$ such that $\widehat{A}$ can be written as
\[
\widehat{A}=\sum_{j}\alpha_{j}\widehat{\Pi}_{j}%
\]
whit absolute convergence in $B(L^{2}(\mathbb{R}^{n}))$; here $\widehat{\Pi
}_{j}$ is the rank-one orthogonal projector of $L^{2}(\mathbb{R}^{n})$ onto
the one-dimensional subspace $\mathbb{C}\psi_{j}$ generated by $\psi_{j}$ (cf.
Lemma \ref{Lemma1}). It turns out that, under the additional assumption
$\widehat{A}$ to be self-adjoint, that $\widehat{A}$ can be represented as a
$\eta$-Weyl operator with corresponding symbol
\[
a=(2\pi\eta)^{n}\sum_{j}\alpha_{j}W_{\eta}\psi_{j}\in L^{2}(\mathbb{R}%
^{2n})\cap L^{\infty}(\mathbb{R}^{2n})
\]
(see Proposition \ref{Prop1}).

When $\widehat{A}$ is positive semidefinite and has trace equal\ to one, it is
called a \textit{density operator} (or density matrix, or stochastic, operator
in quantum mechanics); it is usually denoted by $\widehat{\rho}$. If the Weyl
symbol of $\widehat{\rho}$ is $a$, the function $\rho=(2\pi\eta)^{-n}a$ is
called the \textit{Wigner distribution} of $\widehat{\rho}$ in the quantum
mechanical literature. Given a trace class operator $\widehat{A}$ (positive or
not), the function
\begin{equation}
\rho=\sum_{j}\alpha_{j}W_{\eta}\psi_{j} \label{density}%
\end{equation}
is called the $\eta$-\emph{Wigner distribution} of $\widehat{A}$. (Observe
that $\rho\in L^{2}(\mathbb{R}^{2n})$).

We will henceforth assume that all the concerned operators are self-adjoint
and of trace class and denote them by $\widehat{\rho}$; such operators can
always be written as
\begin{equation}
\widehat{\rho}=\sum_{j}\alpha_{j}\widehat{\Pi}_{j}=(2\pi\eta)^{n}%
\operatorname*{Op}\nolimits_{\eta}^{\mathrm{W}}(\rho) \label{notation1}%
\end{equation}
the real function $\rho$ being given by formula (\ref{density}). We are going
to determine explicit necessary and sufficient conditions on $\rho$ ensuring
the positivity of $\widehat{\rho}$. To this goal, we will use the reduced
symplectic Fourier transform $F_{\Diamond}$, defined for $a\in\mathcal{S}%
(\mathbb{R}^{2n})$ by%
\begin{equation}
a_{\Diamond}(z)=F_{\Diamond}a(z)=\int_{\mathbb{R}^{2n}}e^{i\sigma(z,z^{\prime
})}a(z^{\prime})dz^{\prime} \label{adiam}%
\end{equation}
with $\sigma$ being the standard symplectic form. For $\eta\in\mathbb{R}%
\setminus\{0\}$, recall the symplectic $\eta$-Fourier transform%
\begin{equation}
a_{\sigma,\eta}(z)=F_{\sigma,\eta}a(z)=\left(  \tfrac{1}{2\pi\eta}\right)
^{n}\int_{\mathbb{R}^{2n}}e^{-\frac{i}{\eta}\sigma(z,z^{\prime})}a(z^{\prime
})dz^{\prime}. \label{w4}%
\end{equation}
Obviously $F_{\Diamond}$ is related to the symplectic $\eta$-Fourier transform
(\ref{w4}) by the formula
\begin{equation}
a_{\Diamond}(z)(z)=(2\pi\eta)^{n}a_{\sigma,\eta}(-\eta z). \label{diasig12}%
\end{equation}
With the notation (\ref{adiam}) Bochner's theorem \cite{Bochner,Katz} on
Fourier transforms of probability measures can be restated in the following way:

\begin{proposition}
[Bochner]\label{propbochner}A real function $\rho\in L^{1}(\mathbb{R}^{2n})$
is a probability density if and only if $\rho_{\Diamond}$ is continuous,
$\rho_{\Diamond}(0)=1$, and for all $z_{1},...,z_{N}\in\mathbb{R}^{2n}$ the
$N\times N$ matrix $\Lambda$ whose entries are the complex numbers
$\rho_{\Diamond}(z_{j}-z_{k})$ is positive semidefinite:
\begin{equation}
\Lambda=(\rho_{\Diamond}(z_{j}-z_{k}))_{1\leq j,k\leq N}\geq0. \label{bochner}%
\end{equation}

\end{proposition}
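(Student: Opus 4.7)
The plan is to reduce the claim to the classical Bochner theorem on $\mathbb{R}^{2n}$ by observing that $\rho_\Diamond$ is the characteristic function of $\rho$ composed with a linear bijection. Writing $\sigma(z,z')=Jz\cdot z'$ where $J$ denotes the standard symplectic matrix on $\mathbb{R}^{2n}$, and setting
\begin{equation*}
\chi(\xi)=\int_{\mathbb{R}^{2n}} e^{i\xi\cdot z'}\rho(z')\,dz',
\end{equation*}
one has $\rho_\Diamond(z)=\chi(Jz)$. Since $J$ is a linear bijection, the three properties stated for $\rho_\Diamond$ are equivalent to the corresponding properties of $\chi$: continuity, the value $1$ at the origin, and positive semidefiniteness of the matrix $(\chi(\xi_j-\xi_k))_{j,k}$ for every finite choice of $\xi_1,\dots,\xi_N\in\mathbb{R}^{2n}$. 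One passes between the two formulations via $\xi_j=Jz_j$, exploiting the identity $Jz_j-Jz_k=J(z_j-z_k)$.

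For the necessity direction, I would assume $\rho$ is a probability density, so that $d\mu:=\rho\,dz$ is a Borel probability measure with characteristic function $\chi$. The classical Bochner theorem then immediately yields continuity of $\chi$, the normalization $\chi(0)=\mu(\mathbb{R}^{2n})=1$, and positive semidefiniteness of the Gram matrices, and by the reduction above these transfer to $\rho_\Diamond$.

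For the sufficiency direction, I would assume the three conditions for $\rho_\Diamond$ and hence, by the same reduction, for $\chi$. The classical Bochner theorem then produces a Borel probability measure $\nu$ on $\mathbb{R}^{2n}$ whose characteristic function equals $\chi$. On the other hand, since $\rho\in L^{1}(\mathbb{R}^{2n})$ is real, $\rho\,dz$ is a finite signed Borel measure whose characteristic function is also $\chi$. Injectivity of the Fourier--Stieltjes transform on finite signed Borel measures forces $\rho\,dz=\nu$, whence $\rho\geq 0$ almost everywhere and $\int\rho\,dz=\nu(\mathbb{R}^{2n})=1$, so $\rho$ is a probability density. The only real obstacle is the bookkeeping in the symplectic twist and in the sign and normalization conventions of (\ref{adiam}); the substantive content is the classical Bochner theorem, which I would invoke as a black box.
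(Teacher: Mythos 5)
Your reduction is correct: since $\rho_{\Diamond}(z)=\chi(Jz)$ with $\chi$ the ordinary characteristic function of $\rho\,dz$ and $J$ a linear bijection, the three stated conditions are exactly the hypotheses of the classical Bochner theorem, and the sufficiency direction is properly closed by the uniqueness theorem for Fourier--Stieltjes transforms of finite signed measures. The paper offers no proof of this proposition at all --- it presents it as a mere restatement of the classical theorem with references to Bochner and Katznelson --- so your argument supplies precisely the (routine) bookkeeping the authors leave implicit, and it is the intended one.
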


When condition (\ref{bochner}) is satisfied one says that $\rho_{\Diamond}$ is
of positive type. The notion of $\eta$-positivity, due to Kastler
\cite{Kastler}, generalizes this notion:

\begin{definition}
\label{defhpos}Let $a$ $\in L^{1}(\mathbb{R}^{2n})$ and $\eta\in
\mathbb{R}\setminus\{0\}$; we say that $a_{\Diamond}$\textit{\ }is of $\eta
$\textit{-positive type if for every integer }$N$ \textit{the }$N\times N$
matrix $\Lambda_{(N)}$ with entries
\[
\Lambda_{jk}=e^{-\frac{i\eta}{2}\sigma(z_{j},z_{k})}a_{\Diamond}(z_{j}-z_{k})
\]
is positive semidefinite for all choices of $(z_{1},z_{2},...,z_{N}
)\in(\mathbb{R}^{2n})^{N}$:
\begin{equation}
\Lambda_{(N)}=(\Lambda_{jk})_{1\leq j,k\leq N}\geq0. \label{fzjfzk}%
\end{equation}

\end{definition}

The condition (\ref{fzjfzk}) is equivalent to the polynomial inequalities
\begin{equation}
\sum_{1\leq j,k\leq N}\zeta_{j}\overline{\zeta_{k}}e^{-\frac{i\eta}{2}
\sigma(z_{j},z_{k})}a_{\Diamond}(z_{j}-z_{k})\geq0 \label{polynomial1}%
\end{equation}
for all $N\in\mathbb{N}$, $\zeta_{j},\zeta_{k}\in\mathbb{C}$, and $z_{j}%
,z_{k}\in\mathbb{R}^{2n}$.

It is easy to see that this implies $a_{\Diamond}(-z)=\overline{a_{\Diamond
}(z)}$ and therefore $a$ is real-valued.

\begin{remark}
\label{remeta}If $a$ is of $\eta$\textit{-positive type then it is also of }
$(-\eta)$\textit{-positive type.} This follows from the fact that the matrix
$(\overline{\Lambda_{jk}})_{1\leq j,k\leq N}$ is still positive semidefinite
and taking into account the equality $\overline{a_{\Diamond}(z)}=a_{\Diamond
}(-z)$.
\end{remark}

We first present a result originally due to Kastler \cite{Kastler}, and
Loupias and Miracle-Sole \cite{LouMiracle1,LouMiracle2} (the \textquotedblleft
KLM conditions\textquotedblright), who use the theory of $C^{\ast}$-algebras;
also see Parthasarathy \cite{partha1,partha2} and Parthasarathy and Schmidt
\cite{parthaschmidt}. The proof we give is simpler and is partially based on
the discussions in \cite{Narcow3,Narconnell,Werner}.

\begin{theorem}
[The KLM conditions] \label{Prop2} Let $\eta\in R\setminus\{0\}$ and let
$\widehat{A} =\operatorname*{Op}_{\eta}^{\mathrm{W}}(a)$ be a self-adjoint
trace-class  operator on $L^{2}(\mathbb{R}^{n})$ with symbol $a\in
L^{1}(\mathbb{R}^{2n})$.  We have $\widehat{A}\geq0$ if and only if the
conditions below hold:

(i) $a_{\Diamond}$ is continuous;

(ii) $a_{\Diamond}$ is of $\eta$\textit{-positive type}.
\end{theorem}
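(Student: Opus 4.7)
The plan is to use the integral representation of the Weyl operator as a superposition of Heisenberg-Weyl displacement operators,
$$\widehat{A} = (2\pi\eta)^{-n}\int_{\mathbb{R}^{2n}} a_{\sigma,\eta}(z)\,\widehat{T}_\eta(z)\,dz,$$
together with the Heisenberg composition law $\widehat{T}_\eta(z)\widehat{T}_\eta(z') = e^{\frac{i}{2\eta}\sigma(z,z')}\widehat{T}_\eta(z+z')$. After the frequency-side rescaling $z\mapsto -\eta z$ dictated by (\ref{diasig12}), the Heisenberg phase $e^{\frac{i}{2\eta}\sigma(\cdot,\cdot)}$ becomes precisely the $e^{-\frac{i\eta}{2}\sigma(z_j,z_k)}$ appearing in Definition \ref{defhpos}, so that the two sides of the biconditional live in naturally conjugate languages.

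For \emph{necessity}, continuity of $a_\Diamond$ follows from $a\in L^1(\mathbb{R}^{2n})$ and the Riemann-Lebesgue lemma, since $F_\Diamond$ is, up to a symplectic change of variables, a Fourier transform. For property (ii), fix $\zeta_1,\ldots,\zeta_N \in \mathbb{C}$ and $z_1,\ldots,z_N \in \mathbb{R}^{2n}$ and set
$$\widehat{B} := \sum_k \overline{\zeta_k}\,\widehat{T}_\eta(-\eta z_k).$$
Since $\widehat{B}^*\widehat{B}\geq 0$ and $\widehat{A}\geq 0$ is trace class, one has $\operatorname{tr}(\widehat{A}\widehat{B}^*\widehat{B})\geq 0$. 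Expanding using the composition law, together with the identity $\operatorname{tr}(\widehat{A}\widehat{T}_\eta(-\eta w)) = c_{n,\eta}\,a_\Diamond(w)$ (verified first on rank-one projectors via the spectral decomposition $\widehat{A}=\sum_j\alpha_j\widehat{\Pi}_j$ and the Moyal identity, then extended to trace class by linearity), yields, up to a positive factor, exactly the quadratic form (\ref{polynomial1}), establishing (\ref{fzjfzk}).

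For \emph{sufficiency}, the goal is $\langle\widehat{A}\phi,\phi\rangle\geq 0$ for every $\phi\in L^2(\mathbb{R}^n)$. Fix a Gaussian $\psi_0$; by Gabor-frame completeness, finite linear combinations $\phi_N = \sum_k \overline{\zeta_k^{(N)}}\,\widehat{T}_\eta(-\eta z_k^{(N)})\psi_0$ are dense in $L^2(\mathbb{R}^n)$, so it suffices to prove $\langle\widehat{A}\phi_N,\phi_N\rangle\geq 0$. A direct calculation, parallel to the necessity argument, expresses $\langle\widehat{A}\phi_N,\phi_N\rangle$ as a positively weighted integral of $\eta$-positive-type matrices of the form (\ref{fzjfzk}) against the Wigner transform $W_\eta\psi_0$; the latter is nonnegative because the Wigner function of a Gaussian is Gaussian, so hypothesis (ii) forces each integrand to be nonnegative and the conclusion follows by passing to the limit $N\to\infty$.

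The main obstacle, beyond the tedious but essentially mechanical bookkeeping of Heisenberg and symplectic-Fourier phases (so that all factors conspire with the right sign after the $-\eta$ and $(2\pi\eta)^n$ rescalings of (\ref{diasig12})), is the sufficiency direction's passage from the discrete $\eta$-positive-type matrix condition (\ref{fzjfzk}) to full operator positivity. This amounts to a Bochner-type theorem in the twisted group $C^*$-algebra of $\mathbb{R}^{2n}$: continuity of $a_\Diamond$ (hypothesis (i)) is essential to justify the continuous-versus-discrete passage rigorously, and the nonnegativity of $W_\eta\psi_0$ for a Gaussian $\psi_0$ is what ultimately makes the integrand visibly nonnegative and the Gabor-approximation step valid.
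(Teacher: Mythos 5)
Your necessity argument is sound and is essentially the paper's: expanding $\operatorname{tr}(\widehat{A}\widehat{B}^{\ast}\widehat{B})\geq 0$ with $\widehat{B}$ a finite combination of displacement operators is the same computation as the paper's expansion of $\|\sum_{j}\zeta_{j}T_{\eta}(-z_{j})\psi\|_{L^{2}}^{2}$ after inserting the spectral decomposition $\widehat{A}=\sum_{j}\alpha_{j}\widehat{\Pi}_{j}$, and the Riemann--Lebesgue step for (i) is identical.

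The sufficiency direction, however, has a genuine gap at its central step. You assert that $\langle\widehat{A}\phi_{N},\phi_{N}\rangle$, for $\phi_{N}$ a finite combination of displaced Gaussians, can be written as a positively weighted integral against $W_{\eta}\psi_{0}$ of quadratic forms of the type (\ref{fzjfzk}). If you carry out the computation, using $W_{\eta}(T_{\eta}(w_{k})\psi_{0},T_{\eta}(w_{j})\psi_{0})(z)=e^{-\frac{i}{2\eta}\sigma(w_{k},w_{j})}e^{-\frac{i}{\eta}\sigma(z,w_{k}-w_{j})}W_{\eta}\psi_{0}(z-\frac{1}{2}(w_{k}+w_{j}))$ and substituting $z=u+\frac{1}{2}(w_{k}+w_{j})$, the inner bracket at fixed $u$ involves $a\bigl(u+\frac{1}{2}(w_{j}+w_{k})\bigr)$ --- the symbol itself evaluated at a \emph{sum} of the nodes --- with a $u$-dependent phase; passing to the Fourier side instead produces $a_{\sigma,\eta}(u+w_{k}-w_{j})$ shifted by $u$ and weighted by the complex-valued $\operatorname{Amb}_{\eta}\psi_{0}(u)$. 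In neither form is the integrand a nonnegative weight times a matrix of $\eta$-positive type, so hypothesis (ii) cannot be applied pointwise, and the nonnegativity of the Gaussian Wigner function does not rescue the argument (indeed no proof of this implication should need it: the paper's proof works for arbitrary $\psi\in L^{2}$). The idea you are missing is the phase-cancellation mechanism: the matrix $\bigl(e^{\frac{i}{2\eta}\sigma(z_{j},z_{k})}\operatorname{Amb}_{\eta}\psi(z_{j}-z_{k})\bigr)$ is positive semidefinite (by the necessity computation applied to $\widehat{\Pi}_{\psi}$), the matrix $\bigl(e^{-\frac{i}{2\eta}\sigma(z_{j},z_{k})}a_{\sigma,\eta}(z_{j}-z_{k})\bigr)$ is positive semidefinite by (ii), and their entrywise (Hadamard) product is positive semidefinite by Schur's product theorem --- the opposite twists cancel, leaving the \emph{untwisted} positive-type function $z\mapsto\operatorname{Amb}_{\eta}\psi(z)\,a_{\sigma,\eta}(-z)$, to which classical Bochner applies and yields $\int_{\mathbb{R}^{2n}}a(z)W_{\eta}\psi(z)\,dz\geq 0$ directly. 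Without Schur's lemma (or an equivalent device in the twisted group algebra) the "mechanical bookkeeping" you defer to cannot be made to close.
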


The KLM conditions are difficult to use in practice since they involve the
simultaneous verification of an uncountable set of conditions. We are going to
prove that they can be replaced with a countable set of conditions in phase
space. The key idea from time-frequency analysis is to use Gabor frames.

\begin{definition}
\label{ee0} Given a lattice $\Lambda$ in $\mathbb{R}^{2n}$ and a non-zero
function $g\in L^{2}(\mathbb{R}^{n})$, the system
\[
\mathcal{G}(g,\Lambda)=\{T(\lambda)g(x)=e^{i(\lambda_{2}x-\frac{1}{2}
\lambda_{1}\lambda_{2})}g(x-\lambda_{1}),\,\,\lambda=(\lambda_{1},\lambda
_{2})\in\Lambda\}
\]
is called a Gabor frame or Weyl-Heisenberg frame if it is a frame for
$L^{2}(\mathbb{R}^{n})$, that is there exist constants $0<A\leq B$ such that
\begin{equation}
A\Vert f\Vert_{2}^{2}\leq\sum_{z\in\Lambda}|\langle f,T(\lambda)g\rangle
|^{2}\leq B\Vert f\Vert_{2}^{2},\quad\forall f\in L^{2}(\mathbb{R} ^{n}).
\label{framedef}%
\end{equation}

\end{definition}

Hence, the $L^{2}$-norm of the function $f$ is equivalent to the $\ell^{2}$
norm of the sequence of its coefficients $\{\langle f,T_{1/(2\pi)}(\lambda)
g\rangle\}_{\lambda\in\Lambda}$ (cf. Section \ref{sec1} for more details).
Consider a Gabor frame ${\mathcal{G}}(\phi,\Lambda)$ for $L^{2}(\mathbb{R}%
^{n})$, with window $\phi\in L^{2}(\mathbb{R}^{n})$ and lattice $\Lambda
\in\mathbb{R}^{n}$. Let $a\in\mathcal{S}^{\prime}(\mathbb{R}^{2n})$ be a
symbol and denote by $a_{\lambda,\mu}$ its ``twisted" Gabor coefficient with
respect to the Gabor system $\mathcal{G}(W_{\eta}\phi,\Lambda\times\Lambda)$,
defined for $\lambda,\mu\in\Lambda\times\Lambda$ by
\begin{equation}
a_{\lambda,\mu}=\int_{\mathbb{R}^{2n}}e^{-\frac{i}{\eta}\sigma(z,\lambda-\mu
)}a(z)W_{\eta}\phi(z-\tfrac{1}{2}(\lambda+\mu))dz, \label{amunu}%
\end{equation}
where $W_{\eta}\psi=W_{\eta}(\psi,\psi)$ is the $\eta$-Wigner transform of
$\psi$.

Our main result characterizes the positivity of Hilbert--Schmidt operators
(and hence of trace class operators). It reads as follows:

\begin{theorem}
\label{ThmFabio}Let $a\in L^{2}(\mathbb{R}^{n})$ be real-valued and
$\widehat{A}_{\eta}=\operatorname*{Op}_{\eta}^{\mathrm{W}}(a)$.

(i) We have $\widehat{A}_{\eta}\geq0$ if and only if for every integer
$N\geq0$ the matrix $M_{(N)}$ with entries
\begin{equation}
M_{\lambda,\mu}=e^{-\frac{i}{2\eta}\sigma(\lambda,\mu)}a_{\lambda,\mu}\text{
\ , \ }|\lambda|,|\mu|\leq N \label{lm1}%
\end{equation}
is positive semidefinite.

(ii) One obtains an equivalent statement replacing the matrix $M_{(N)}$ with
the matrix $M^{\prime}_{(N)}$ where
\begin{equation}
M^{\prime}_{\lambda,\mu}=W_{\eta}(a,(W_{\eta}\phi)^{\vee})(\tfrac{1}
{4}(\lambda+\mu),\tfrac{1}{2}J(\mu-\lambda)) \label{lm2}%
\end{equation}
with $(W_{\eta}\phi)^{\vee}(z)=W_{\eta}\phi(-z)$.
\end{theorem}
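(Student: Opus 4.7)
The plan is to convert the operator inequality $\widehat{A}_\eta\geq 0$ into a quadratic-form inequality on a Gabor expansion of $L^2(\mathbb{R}^n)$, and then to recognise $M_{(N)}$ as the principal $N$-truncation of that form. Since $a$ is real, the Weyl calculus identity reads $\langle \widehat{A}_\eta f, g\rangle_{L^2} = \int a(z)\,W_\eta(f,g)(z)\,dz$. The key preliminary step is to evaluate this at pairs of Gabor atoms: the symplectic covariance of the $\eta$-cross-Wigner transform (a direct change of variable in \eqref{ww}) produces
\[
W_\eta(T(\lambda)\phi,T(\mu)\phi)(z) \;=\; e^{\frac{i}{\eta}\sigma(\lambda-\mu,z)-\frac{i}{2\eta}\sigma(\lambda,\mu)}\,W_\eta\phi\bigl(z-\tfrac12(\lambda+\mu)\bigr),
\]
and integrating against $a(z)$ and comparing with \eqref{amunu} yields
\[
\langle \widehat{A}_\eta T(\lambda)\phi, T(\mu)\phi\rangle \;=\; e^{-\frac{i}{2\eta}\sigma(\lambda,\mu)}\,a_{\lambda,\mu} \;=\; M_{\lambda,\mu}.
\]
Reconciling the symplectic-covariance phase with the twist $e^{-i\sigma(\lambda,\mu)/(2\eta)}$ built into $M_{\lambda,\mu}$ is the one genuinely delicate point in the proof; everything else reduces to a density argument.

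With that identity in hand, (i) is essentially formal. For the forward direction, fix any finite complex sequence $(c_\lambda)_{|\lambda|\leq N}$ and let $f = \sum_{|\lambda|\leq N} c_\lambda T(\lambda)\phi$; then $\sum c_\lambda \overline{c_\mu} M_{\lambda,\mu} = \langle \widehat{A}_\eta f, f\rangle \geq 0$, so $M_{(N)}$ is positive semidefinite. For the converse direction I would invoke the Gabor frame $\mathcal{G}(\phi,\Lambda)$ with canonical dual window $\widetilde{\phi}$: every $f\in L^2(\mathbb{R}^n)$ expands as $f = \sum_{\lambda\in\Lambda} c_\lambda T(\lambda)\phi$ with $c_\lambda = \langle f, T(\lambda)\widetilde{\phi}\rangle$, and the truncations $f_N = \sum_{|\lambda|\leq N} c_\lambda T(\lambda)\phi$ converge to $f$ in $L^2$. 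Since $a\in L^2(\mathbb{R}^{2n})$ makes $\widehat{A}_\eta$ Hilbert--Schmidt, hence bounded on $L^2$, we have $\langle \widehat{A}_\eta f_N, f_N\rangle \to \langle \widehat{A}_\eta f, f\rangle$; the assumed positivity of every $M_{(N)}$ forces this limit to be nonnegative, proving $\widehat{A}_\eta\geq 0$.

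For (ii) the idea is to show that $M'_{\lambda,\mu}$ is a \emph{positive real multiple} of $M_{\lambda,\mu}$, so that the two matrix conditions are trivially equivalent. Unfolding the definition \eqref{ww} of the $\eta$-cross-Wigner transform at the point $\bigl(\tfrac14(\lambda+\mu),\tfrac12 J(\mu-\lambda)\bigr)$, using that $W_\eta\phi$ is real, and making the change of variable $z' = \tfrac12(\lambda+\mu) + Y/2$ in the integration variable recasts the integral as $a_{\lambda,\mu}$ multiplied by the linear phase $e^{-i\sigma(\lambda,\mu)/(2\eta)}$ (produced by the remaining linear exponent after the substitution) and a dimension-dependent positive constant $(\pi\eta)^{-2n}$; thus $M'_{\lambda,\mu} = (\pi\eta)^{-2n}\,M_{\lambda,\mu}$ and (ii) follows at once from (i).
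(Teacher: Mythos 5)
Your proposal follows the paper's own argument essentially verbatim: part (i) rests on the same covariance identity for $W_\eta(T(\lambda)\phi,T(\mu)\phi)$, which reduces $(\widehat{A}_\eta\psi|\psi)_{L^2}$ to the quadratic form $\sum c_\lambda\overline{c_\mu}M_{\lambda,\mu}$ (finite sequences for necessity, the dual-window expansion plus $L^2$-boundedness of the Hilbert--Schmidt operator for sufficiency), and part (ii) is the same change of variables showing $M'_{\lambda,\mu}$ is a positive constant multiple of $M_{\lambda,\mu}$. Your constant $(\pi\eta)^{-2n}$ is in fact the correct one---the Jacobian of $u=2z-\tfrac{1}{2}(\lambda+\mu)$ in $\mathbb{R}^{2n}$ is $2^{2n}$, whereas the paper writes $2^{n}$---but since both constants are positive the equivalence in (ii) is unaffected either way.
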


The conditions in Theorem \ref{ThmFabio} only involve a countable set of
matrices, as opposed to the KLM ones. In addition, they are
\emph{well-organized} because the matrix of size $N$ is a submatrix of that
of  size $N+1$.

The KLM conditions can be recaptured by an averaging procedure from the ones
in Theorem \ref{ThmFabio}. To show this claim, we make use of another
well-known time-frequency representation: the short-time Fourier transform
(STFT). Precisely, for a given function $g\in\mathcal{S}(\mathbb{R}%
^{n})\setminus\{0\}$ (called window), the STFT $V_{g} f$ of a distribution
$f\in\mathcal{S^{\prime}}(\mathbb{R}^{n})$ is defined by
\begin{equation}
\label{STFT}V_{g} f(x,p)=\int_{\mathbb{R}^{n}}e^{- i p\cdot y} f(y)
\overline{g(y-x)}\,dy,\quad(x,p)\in\mathbb{R}^{2n}.
\end{equation}
Let $\phi_{0}(x)=(\pi\eta)^{-n/4}e^{-|x|^{2}/2\eta}$ be the standard Gaussian
and $\phi_{\nu}=T(\nu)\phi_{0}$, $\nu\in\mathbb{R}^{2n}$. We shall consider
the STFT $V_{W\phi_{\nu}}a$, with window given by the Wigner function
$W\phi_{\nu}$ and symbol $a$. Then we establish the following connection:

\begin{theorem}
\label{Theorem2}Let $a\in L^{1}(\mathbb{R}^{2n})$ and $\lambda,\mu
\in\mathbb{R}^{2n}$. We set
\begin{align}
M^{(KLM)}_{\lambda,\mu}  &  =e^{-\frac{i}{2\eta}\sigma(\lambda,\mu)}%
a_{\sigma,\eta}(\lambda-\mu)\nonumber\\
M_{\lambda,\mu}^{\phi_{\nu}}  &  =e^{-\frac{i}{2\eta}\sigma(\lambda,\mu
)}V_{W\phi_{\nu}}a(\tfrac{1}{2}(\lambda+\mu),J(\mu-\lambda)). \label{formula2}%
\end{align}
We have
\begin{equation}
\label{formula3}M^{(KLM)}_{\lambda,\mu}=(2\pi\eta)^{-n} \int_{\mathbb{R}^{2n}%
}M_{\lambda,\mu}^{\phi_{\nu}}\, d\nu.
\end{equation}

\end{theorem}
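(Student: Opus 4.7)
The plan is first to cancel the common scalar $e^{-\frac{i}{2\eta}\sigma(\lambda,\mu)}$ from both sides of \eqref{formula3}, reducing the statement to the identity
$$a_{\sigma,\eta}(\lambda-\mu)=(2\pi\eta)^{-n}\int_{\mathbb{R}^{2n}}V_{W\phi_\nu}a\bigl(\tfrac12(\lambda+\mu),\,J(\mu-\lambda)\bigr)\,d\nu.$$
Expanding the right-hand side using the definition \eqref{STFT} of the STFT and applying Fubini's theorem (justified by the assumption $a\in L^{1}(\mathbb{R}^{2n})$ and the Schwartz character of $W\phi_\nu$, which gives a uniform integrable majorant), I would rewrite it as
$$(2\pi\eta)^{-n}\int_{\mathbb{R}^{2n}} e^{-iJ(\mu-\lambda)\cdot z}\,a(z)\left[\int_{\mathbb{R}^{2n}}\overline{W\phi_\nu\bigl(z-\tfrac12(\lambda+\mu)\bigr)}\,d\nu\right]dz.$$

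The core of the argument is the inner integral. Since $\phi_\nu=T(\nu)\phi_{0}$ and the Wigner transform is covariant under the time-frequency shifts $T(\nu)$, the function $W\phi_\nu$ is a translate of the fixed real Gaussian $W\phi_{0}$ in phase space (up to a linear $\eta$-rescaling of the second block of $\nu$ coming from the definition of $T(\nu)$). A change of variable in $\nu$ absorbs that rescaling into a Jacobian and converts the inner integral into $\int_{\mathbb{R}^{2n}}W\phi_{0}(\zeta)\,d\zeta$, which equals $\Vert\phi_{0}\Vert_{2}^{2}=1$ by the marginal property of the Wigner transform applied to the normalized Gaussian $\phi_{0}(x)=(\pi\eta)^{-n/4}e^{-|x|^{2}/2\eta}$. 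Crucially, the result is \emph{independent} of $z$ and of $u=\tfrac12(\lambda+\mu)$, so it pulls outside as an $\eta$-dependent constant $c_\eta$.

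What remains on the right-hand side is a pure Fourier-type integral of $a$ at the phase-space frequency $J(\mu-\lambda)$. Using the convention $\sigma(z,z')=Jz\cdot z'$ for the symplectic form, one rewrites $-iJ(\mu-\lambda)\cdot z = -i\sigma(\mu-\lambda,z) = i\sigma(\lambda-\mu,z)$, and absorbing the factor $1/\eta$ from \eqref{w4} through the change of variable $z\mapsto z/\eta$ identifies this remaining integral, up to the prefactor $c_\eta(2\pi\eta)^{-n}$, with $a_{\sigma,\eta}(\lambda-\mu)$. The main obstacle is precisely this final bookkeeping step: one has to verify that the Jacobian $c_\eta$ generated by the $\nu$-substitution, the prefactor $(2\pi\eta)^{-n}$, and the $1/\eta$ hidden in the symplectic exponential of $a_{\sigma,\eta}$ conspire exactly so that the frequency variable appears as the unrescaled $J(\mu-\lambda)$ on both sides. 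All other steps (Fubini, covariance of $W$, marginals of a Gaussian) are standard; once the constants are tracked, the identity \eqref{formula3} follows.
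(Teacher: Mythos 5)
Your argument is essentially the paper's own proof: the authors likewise recognize $M^{(KLM)}_{\lambda,\mu}$ as an STFT of $a$ with the constant window $\Phi\equiv 1$, observe that $\int_{\mathbb{R}^{2n}}W\phi_{\nu}(z)\,d\nu=1=\Phi(z)$ because $W\phi_{\nu}(z)=(\pi\eta)^{-n}e^{-|z-\nu|^{2}/\eta}$ is a normalized Gaussian in $z-\nu$, and conclude by exchanging the $\nu$-integral with the STFT integral via Fubini, justified exactly as you justify it by $a\in L^{1}$ and the integrable Gaussian majorant. The final ``bookkeeping'' you defer is also the one point the paper treats silently (it takes $\phi_{\nu}=T(\nu)\phi_{0}$ to be the $\eta$-displacement of $\phi_{0}$, so the $\nu$-integral is exactly $1$ with no Jacobian), so your proof matches the paper's in both structure and substance.
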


If the symbol $a\in L^{1}(\mathbb{R}^{n}) \cap L^{2}(\mathbb{R}^{n})$ and
choosing the lattice $\Lambda$ such that $\mathcal{G}(\phi_{0},\Lambda)$ is a
Gabor frame for $L^{2}(\mathbb{R}^{n})$, we obtain the following consequence:
\emph{If the matrix $(M_{\lambda,\mu}^{\phi_{0}})_{\lambda,\mu\in
\Lambda,|\lambda|,|\mu|\leq N}$ is positive semidefinite for every $N$, then
so is the matrix $(M^{(KLM)}_{\lambda,\mu})_{\lambda,\mu\in\Lambda
,|\lambda|,|\mu|\leq N}$} (cf. Corollary \ref{coro1}).

Finally, if the symbol $a$ is as before and and $\widehat{A}_{\eta
}=\operatorname*{Op}_{\eta}^{\mathrm{W}}(a)\geq0$, then for every finite
subset $S\subset\mathbb{R}^{2n}$ the matrix $(M^{(KLM)}_{\lambda,\mu
})_{\lambda,\mu\in S}$ is positive semidefinite. That is, the KLM conditions
hold (see Corollary \ref{coro2}).

The paper is organized as follows:

\begin{itemize}
\item In Section \ref{sec1} we briefly recall the main definitions and
properties of the Wigner--Weyl--Moyal formalism.

\item In Section \ref{sec2} we discuss the notion of positivity for trace
class operators; we also prove a continuous version of the positivity theorem
using the machinery of Hilbert--Schmidt operators.

\item In Section \ref{secKLM} we characterize positivity using the
Kastler--Loupias--Miracle-Sole (KLM) conditions of which we give a simple
proof. We give a complete description of trace class operators with Gaussian
Weyl symbols using methods which simplify and put on a rigorous footing older
results found in the physical literature.

\item In Section \ref{Secfabio} we show that the KLM conditions, which form an
uncountable set of conditions can be replaced with a set of countable
conditions involving the Wigner function. We thereafter study the notion of
\textquotedblleft almost positivity\textquotedblright\ which is an useful
approximation of the notion of positivity which can be easily implemented numerically.
\end{itemize}

\begin{notation}
We denote by $z=(x,p)$ the generic element of $\mathbb{R}^{2n}\equiv
\mathbb{R}^{n}\times\mathbb{R}^{n}$. Equipping $\mathbb{R}^{2n}$ with the
symplectic form $\sigma=\sum_{j}dp_{j}\wedge dx_{j}$ we denote by
$\operatorname*{Sp}(n)$ the symplectic group of $(\mathbb{R}^{2n},\sigma)$ and
by $\operatorname*{Mp}(n)$ the corresponding metaplectic group. $J=%
\begin{pmatrix}
0_{n\times n} & I_{n\times n}\\
-I_{n\times n} & 0_{n\times n}%
\end{pmatrix}
$ is the standard symplectic matrix, and we have $\sigma(z,z^{\prime})=Jz\cdot
z^{\prime}$. The $L^{2}$-scalar product is given by
\[
(\psi|\phi)_{L^{2}}=\int_{\mathbb{R}^{n}}\psi(x)\overline{\phi(x)}dx.
\]
The distributional pairing between $\psi\in\mathcal{S}^{\prime}(\mathbb{R}%
^{m})$ and $\phi\in\mathcal{S}(\mathbb{R}^{m})$ is denoted by $\langle
\psi,\phi\rangle$ regardless of the dimension $m$.

For $A,B\in \mathrm{GL}(m)$, we use the notation $A\backsim B$ to denote the
equality of two square matrices $A,B$ of same size $m\times m$ up to
conjugation: $A\backsim B$ if and only if there exists $C\in \mathrm{GL}(m)$
such that $A=C^{-1}BC$.
\end{notation}

\section{Weyl Operators and Gabor frames}

\label{sec1}

\subsection{The Weyl--Wigner formalism}

In what follows $\eta$ denotes a real parameter different from zero.

Given a symbol $a\in\mathcal{S}^{\prime}(\mathbb{R}^{2n})$ the Weyl
pseudodifferential operator $\widehat{A}_{\eta}^{\mathrm{W}}%
=\operatorname*{Op}_{\eta}^{\mathrm{W}}(a)$ is defined in \eqref{w1}, whereas
the $\eta$-cross-Wigner transform $W_{\eta}(\psi,\phi)$ is recalled in \eqref{ww}.

The operator $T_{\eta}(z)$ is Heisenberg's $\eta$-displacement operator%
\begin{equation}
T_{\eta}(z_{0})\psi(x)=e^{\frac{i}{\eta}(p_{0}x-\frac{1}{2}p_{0}x_{0})}%
\psi(x-x_{0}) \label{w3}%
\end{equation}
(see \cite{Birk,Birkbis}). The $\eta$-cross-ambiguity transform is defined by
\begin{equation}
\operatorname*{Amb}\nolimits_{\eta}(\psi,\phi)(z)=\left(  \tfrac{1}{2\pi\eta
}\right)  ^{n}(\psi|T_{\eta}(z)\phi)_{L^{2}}; \label{defamb}%
\end{equation}
we have \cite{Folland,Birkbis} the relation%
\begin{equation}
\operatorname*{Amb}\nolimits_{\eta}(\psi,\phi)=F_{\sigma,\eta}W_{\eta}%
(\psi,\phi), \label{w5}%
\end{equation}
where $F_{\sigma,\eta}$ is the symplectic $\eta$-Fourier transform already
recalled in \eqref{w4}. The functions $W_{\eta}\psi=W_{\eta}(\psi,\psi)$ and
$\operatorname*{Amb}\nolimits_{\eta}\psi=\operatorname*{Amb}\nolimits_{\eta
}(\psi,\psi)$ are called, respectively, the $\eta$-Wigner and $\eta$-ambiguity
transforms. The explicit expression of the $\eta$-Wigner transform is already
given in \eqref{ww}, whereas  the $\eta$-ambiguity transform is defined by
\begin{equation}
\operatorname*{Amb}\nolimits_{\eta}(\psi,\phi)(z) =\left(  \tfrac{1}{2\pi\eta
}\right)  ^{n}\int_{\mathbb{R}^{n}}e^{-\tfrac{i}{\eta}p\cdot y}\psi
(y+\tfrac{1}{2}x)\overline{\phi(y-\tfrac{1}{2}x)}dy. \label{wa}%
\end{equation}
%%%%%%%%%%%%%%%%%%%%%%%%%%%%%

Let $\widehat{A}_{\eta}^{\mathrm{W}}=\operatorname*{Op}_{\eta}^{\mathrm{W}%
}(a)$ and $\widehat{B}_{\eta}^{\mathrm{W}}=\operatorname*{Op}_{\eta
}^{\mathrm{W}}(b)$ and assume that $\widehat{A}_{\eta}^{\mathrm{W}}%
\widehat{B}_{\eta}^{\mathrm{W}}$ is defined on some subspace of $L^{2}%
(\mathbb{R}^{n})$; then the twisted symbol $c_{\sigma,\eta}$ of $\widehat{C}%
_{\eta}^{\mathrm{W}}=\widehat{A}_{\eta}^{\mathrm{W}}\widehat{B}_{\eta
}^{\mathrm{W}}$ is given by the \textquotedblleft twisted
convolution\textquotedblright\ \cite{Folland,Birkbis} $c_{\sigma,\eta
}=a_{\sigma,\eta}\ast_{\eta}b_{\sigma,\eta}$ defined by
\begin{equation}
(a_{\sigma,\eta}\star_{\eta}b_{\sigma,\eta})(z)=\left(  \tfrac{1}{2\pi\eta
}\right)  ^{n}\int_{\mathbb{R}^{2n}}e^{\frac{i}{2\eta}\sigma(z,z^{\prime}%
)}a_{\sigma,\eta}(z-z^{\prime})b_{\sigma,\eta}(z^{\prime})dz^{\prime}.
\label{twist1}%
\end{equation}
Alternatively, the symbol $c$ is given by the \textquotedblleft twisted
product\textquotedblright\ $c=a\times_{\hbar}b$ where%

\begin{equation}
(a\times_{\eta}b)(z)=\left(  \tfrac{1}{4\pi\eta}\right)  ^{2n}\int%
_{\mathbb{R}^{2n}}e^{\frac{i}{2\eta}\sigma(z^{\prime},z^{\prime\prime}%
)}a(z+\tfrac{1}{2}z^{\prime})b(z-\tfrac{1}{2}z^{\prime\prime})dz^{\prime
}dz^{\prime\prime}. \label{twist2}%
\end{equation}

An important property of the $\eta$-Wigner transform is that it satisfies the
\textquotedblleft marginal properties\textquotedblright%
\begin{equation}
\int_{\mathbb{R}^{n}}W_{\eta}\psi(z)dx=|F_{\eta}\psi(p)|^{2}\text{ \ , \ }%
\int_{\mathbb{R}^{n}}W_{\eta}\psi(z)dp=|\psi(x)|^{2},\label{marginal}%
\end{equation}
%ELENA%%%%%%%%%%%%%%%%%%%
the first for every function $\psi\in L^{1}(\mathbb{R}^{n})\cap L^{2}%
(\mathbb{R}^{n})$, the second for every function $\psi\in L^{2}(\mathbb{R}%
^{n})$ such that $\hat{\psi}\in L^{1}(\mathbb{R}^{n})$;
%%%%%%%%%%%%%%%%%%%%%%%%%%%%%%%
here%
\begin{equation}
F_{\eta}\psi(p)=\left(  \tfrac{1}{2\pi|\eta|}\right)  ^{n/2}\int%
_{\mathbb{R}^{n}}e^{-\frac{i}{\eta}px}\psi(x)dx\label{Feta}%
\end{equation}
is the $\eta$-Fourier transform (see \cite{Folland,Springer}). Notice that
$F_{\eta}\psi$ and $F_{-\eta}\psi$ are related by the trivial formula%
\begin{equation}
F_{-\eta}\psi=(-1)^{n}\overline{F_{\eta}\overline{\psi}}.\label{fmineta}%
\end{equation}
It follows that $F_{\eta}$ extends into a topological unitary automorphism of
$L^{2}(\mathbb{R}^{n})$ for all values of $\eta\neq0$.

An important equality satisfied by the $\eta$-Wigner function is Moyal's
identity\footnote{It is sometimes also called the \textquotedblleft
orthogonality relation\textquotedblright\ for the Wigner function.}:

\begin{lemma}
Let $(\psi,\phi)\in L^{2}(\mathbb{R}^{n})\times L^{2}(\mathbb{R}^{n})$ and
$\eta\in\mathbb{R}\setminus\{0\}$. The function $W_{\eta}\psi$ is real and we
have
\begin{equation}
||W_{\eta}\psi||_{L^{2}(\mathbb{R}^{2n})}^{2}=\int_{\mathbb{R}^{2n}}W_{\eta
}\psi(z)W_{\eta}\phi(z)dz=\left(  \tfrac{1}{2\pi|\eta|}\right)  ^{n}%
|(\psi|\phi)|^{2}. \label{Moyaleta}%
\end{equation}
In particular%
\begin{equation}
\int_{\mathbb{R}^{2n}}W_{\eta}\psi(z)^{2}dz=\left(  \tfrac{1}{2\pi|\eta
|}\right)  ^{n}||\psi||^{4}. \label{Moyal2eta}%
\end{equation}

\end{lemma}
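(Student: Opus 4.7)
My plan is to first establish that $W_\eta\psi$ is real-valued, and then derive the Moyal identity by reducing to a Plancherel computation in a single variable.

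The reality of $W_\eta\psi$ follows by taking the complex conjugate of (\ref{ww}) with $\phi=\psi$ and performing the substitution $y\mapsto -y$: the prefactor $(2\pi\eta)^{-n}$ is real, the exponential picks up the required sign change, and the roles of $\psi(x+\tfrac{1}{2}y)$ and $\overline{\psi(x-\tfrac{1}{2}y)}$ swap to recover the original integrand.

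For the main identity, the key observation is that, for each fixed $x$, the function $p\mapsto W_\eta\psi(x,p)$ is, up to an explicit constant factor, the $\eta$-Fourier transform (\ref{Feta}) of $g_x^{\psi}(y):=\psi(x+\tfrac{1}{2}y)\overline{\psi(x-\tfrac{1}{2}y)}$, and analogously for $\phi$. Applying Plancherel for the unitary $F_\eta$ to the inner $p$-integral therefore gives
\[
\int_{\mathbb{R}^n}W_\eta\psi(x,p)\,\overline{W_\eta\phi(x,p)}\,dp=\frac{1}{(2\pi|\eta|)^n}\int_{\mathbb{R}^n}g_x^{\psi}(y)\,\overline{g_x^{\phi}(y)}\,dy,
\]
after a short check that the squared prefactor $(2\pi\eta)^{-2n}$ combines with the Plancherel constant of $F_\eta$ to yield the sign-free factor $(2\pi|\eta|)^{-n}$. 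Integrating in $x$ and applying the change of variables $(u,v)=(x+\tfrac{1}{2}y,\,x-\tfrac{1}{2}y)$, which has unit Jacobian in absolute value, the integrand factorizes as $\psi(u)\overline{\phi(u)}\cdot\overline{\psi(v)}\phi(v)$, and Fubini produces $(2\pi|\eta|)^{-n}(\psi|\phi)\overline{(\psi|\phi)}=(2\pi|\eta|)^{-n}|(\psi|\phi)|^2$. Using the first step to drop the complex conjugate on the left then yields the middle equality of (\ref{Moyaleta}); the special case $\phi=\psi$ gives (\ref{Moyal2eta}).

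The anticipated obstacles are bookkeeping rather than conceptual: one must keep careful track of the sign-dependent prefactors involving $\eta$ versus $|\eta|$, and justify the use of Fubini, which is immediate for $\psi,\phi\in\mathcal{S}(\mathbb{R}^n)$ and extends to general $L^2$ by density using the very identity being proved. A slightly more conceptual alternative would invoke (\ref{w5}) together with the unitarity of $F_{\sigma,\eta}$ on $L^2(\mathbb{R}^{2n})$ to transfer the Moyal integral to the ambiguity side, where it reduces to $\int|(\psi|T_\eta(z)\phi)|^2\,dz$ and is evaluated by ordinary Plancherel in the frequency variable of $T_\eta(z)$; the price is a slightly longer chain of identities to verify.
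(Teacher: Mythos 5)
Your proof is correct, but it takes a genuinely different route from the paper: the paper disposes of the lemma in two lines, citing the standard Moyal identity for $\eta>0$ from Folland/Gr\"ochenig and reducing the case $\eta<0$ to it via the conjugation formula (\ref{scale2}), $W_{\eta}\psi=(-1)^{n}W_{-\eta}\overline{\psi}$, whereas you give a self-contained derivation that treats both signs of $\eta$ at once (the prefactors correctly collapse to $(2\pi|\eta|)^{-n}$ since $(2\pi\eta)^{-2n}=(2\pi|\eta|)^{-2n}$). Your computation --- recognizing $p\mapsto W_{\eta}\psi(x,p)$ as an $\eta$-Fourier transform of $g_x^{\psi}$, applying Plancherel in $p$, then the unit-Jacobian change $(u,v)=(x+\tfrac12 y,\,x-\tfrac12 y)$ --- is the standard textbook proof being cited, so you have essentially unpacked the reference; what you gain is independence from the literature and a uniform treatment of the sign, at the cost of the measure-theoretic bookkeeping you flag. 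One small refinement there: for fixed $x$ the function $g_x^{\psi}$ is a product of two $L^2$ functions of $y$ and so is a priori only in $L^1$, not $L^2$; the clean fix is to note that $\int\!\!\int|g_x^{\psi}(y)|^2\,dy\,dx=\Vert\psi\Vert^4<\infty$ by the same change of variables, so $g_x^{\psi}\in L^2(\mathbb{R}^n_y)$ for almost every $x$ and Plancherel applies there, with the $x$-integration then justified by Cauchy--Schwarz (equivalently, view $W_{\eta}(\psi,\phi)$ as a partial Fourier transform of the $L^2(\mathbb{R}^{2n})$ function $\psi\otimes\overline{\phi}$ precomposed with a measure-preserving linear map); this makes the density step unnecessary. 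Finally, note that the first equality in (\ref{Moyaleta}) as printed only makes sense for $\phi=\psi$; your proof correctly establishes the substantive middle equality and its special case (\ref{Moyal2eta}).
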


\begin{proof}
It is a standard result \cite{Folland,Gro} that (\ref{Moyaleta}) holds for all
$\eta>0$. The case $\eta<0$ follows using formula (\ref{scale2}).
\end{proof}

In Section \ref{Secfabio} we will use some concepts from time-frequency
analysis. We recall here the most important issues.

A Gabor frame $\mathcal{G}(\phi,\Lambda)$ is defined in Definition
\eqref{ee0}. This implies that any function $f\in L^{2}(\mathbb{R}^{n})$ can
be represented as
\[
f=\sum_{\lambda\in\Lambda} c_{\lambda}T(\lambda)g,
\]
with unconditional convergence in $L^{2}(\mathbb{R}^{n})$ and with suitable
coefficients $(c_{\lambda})_{\lambda}\in\ell^{2}(\Lambda)$.

A time-frequency representation closely related to the Wigner function is the
short-time Fourier transform (STFT), whose definition is in formula
\eqref{STFT}. Using this representation, we can define the Sj\"ostrand class
or modulation space $M^{\infty,1}_{v_{s}}$ \cite{F1,wiener30} in terms of the
decay of the STFS as follows. For $s\geq0$, consider the weight function
$v_{s}(z)=\langle z\rangle^{s}=(1+|z|^{2})^{s/2}$, $z\in\mathbb{R}^{2n}$,
then
\[
M^{\infty,1}_{v_{s}}(\mathbb{R}^{n})=\{f\in\mathcal{S^{\prime}}(\mathbb{R}%
^{n}): \|f\|_{M^{\infty,1}_{v_{s}}}:=\int_{\mathbb{R}^{n}}\sup_{x\in
\mathbb{R}^{n}}|V_{g} f(x,p)|v_{s}(x,p)\,dp<\infty\}.
\]
It can be shown that $\|f\|_{M^{\infty,1}_{v_{s}}}$ is a norm on $M^{\infty
,1}_{v_{s}}(\mathbb{R}^{n})$, independent of the window function
$g\in\mathcal{S}(\mathbb{R}^{n})$ (different windows yield equivalent norms).
Moreover $M^{\infty,1}_{v_{s}}(\mathbb{R}^{n})$ is a Banach space. For $s=0$
we simply write $M^{\infty,1}(\mathbb{R}^{n})$ in place of $M^{\infty
,1}_{v_{s}}(\mathbb{R}^{n})$.

Generally, by measuring the decay of the STFT by means of the mixed-normed
spaces $L_{v_{s}}^{p,q}(\mathbb{R}^{2n})$, one can define a scale of Banach
spaces known as modulation spaces. Here we will make use only of the so-called
Feichtinger's algebra (unweighted case $s=0$)
\[
M^{1}(\mathbb{R}^{n})=\{f\in\mathcal{S^{\prime}}(\mathbb{R}^{2n}):\Vert
f\Vert_{M^{1}}=\Vert V_{g}f\Vert_{L^{1}(\mathbb{R}^{2n})}<\infty\}.
\]
Notice that in Section \ref{Secfabio} we will work with spaces of symbols,
hence the dimension $n$ of the space is replaced by $2n$.

\section{The Positivity of Trace-Class Weyl Operators\label{sec2}}

Trace class operators play an essential role in quantum mechanics. A positive
semidefinite self-adjoint operator with unit trace is called a \emph{density
operator }(or \emph{density matrix} in the physical literature). Density
operators represent (and are usually identified with) the mixed quantum states
corresponding to statistical mixtures of quantum pure states.

\subsection{Trace class operators}

A bounded linear operator $\widehat{A}$ on $L^{2}(\mathbb{R}^{n})$ is of trace
class if for one (and hence every) orthonormal basis $(\psi_{j})_{j}$ of
$L^{2}(\mathbb{R}^{n})$ its modulus $|\widehat{A}|=(\widehat{A}^{\ast
}\widehat{A})^{1/2}$ satisfies
\begin{equation}
\sum_{j}(|\widehat{A}|\psi_{j}|\psi_{j})_{L^{2}}<\infty; \label{tr1}%
\end{equation}
the trace of $\widehat{A}$ is then, by definition, given by the absolutely
convergent series $\operatorname{Tr}(\widehat{A})=\sum_{j}(\widehat{A}\psi
_{j}|\psi_{j})_{L^{2}}$whose value is independent of the choice of the
orthonormal basis $(\psi_{j})_{j}$.

Trace class operators form a two-side ideal $\mathcal{L}_{1}(L^{2}%
(\mathbb{R}^{n}))$ in the algebra $\mathcal{L}(L^{2}(\mathbb{R}^{n}))$ of all
bounded linear operators on $L^{2}(\mathbb{R}^{n})$.

An operator $\widehat{A}\in\mathcal{L}(L^{2}(\mathbb{R}^{n}))$ is a
Hilbert--Schmidt operator if and only if there exists an orthonormal basis
$(\psi_{j})$ such that
\[
\sum_{j}(\widehat{A}\psi_{j}|\widehat{A}\psi_{j})_{L^{2}}<\infty.
\]
Hilbert--Schmidt operators form a two-sided ideal $\mathcal{L}_{2}%
(L^{2}(\mathbb{R}^{n}))$ in $\mathcal{L}(L^{2}(\mathbb{R}^{n}))$ and we have
$\mathcal{L}_{1}(L^{2}(\mathbb{R}^{n}))\subset\mathcal{L}_{2}(L^{2}%
(\mathbb{R}^{n}))$. A trace class operator can always be written (non
uniquely) as the product of two Hilbert--Schmidt operators (and is hence
compact). In particular, a positive (and hence self-adjoint) trace-class
operator can always be written in the form $\widehat{A}=\widehat{B}^{\ast
}\widehat{B}$ where $\widehat{B}$ is a Hilbert--Schmidt operator. One proves
(\cite{blabru}, \S 22.4, also \cite{Birk,Birkbis}) using the spectral theorem
for compact operators that for every trace-class operator on $L^{2}%
(\mathbb{R}^{n})$ there exists a sequence $(\alpha_{j})_{j}\in\ell
^{1}(\mathbb{N})$ and orthonormal bases $(\psi_{j})_{j}$ and $(\phi_{j})_{j}$
of $L^{2}(\mathbb{R}^{n})$ (indexed by the same set) such that for $\psi\in
L^{2}(\mathbb{R}^{n})$
\begin{equation}
\widehat{A}\psi=\sum_{j}\alpha_{j}(\psi|\psi_{j})_{L^{2}}\phi_{j}; \label{tr3}%
\end{equation}
conversely the formula above defines a trace-class operator on $L^{2}%
(\mathbb{R}^{n})$.
%ELENA
Observe that the series in \eqref{tr3} is absolutely convergent in
$L^{2}(\mathbb{R}^{n})$ (hence unconditionally convergent), since
\[
\sum_{j}\| \alpha_{j}(\psi|\psi_{j})_{L^{2}}\phi_{j}\|=\sum_{j}|\alpha_{j}|
|(\psi|\psi_{j})_{L^{2}}| \|\phi_{j}\|\newline\leq\sum_{j}|\alpha_{j}|\|\psi\|
\|\psi_{j}\|=\sum_{j}|\alpha_{j}| \|\psi\|.
\]
%%%%%%%%%%%%%%%%%%%%%%%%%%%
One verifies that the adjoint $\widehat{A}^{\ast}$ (which is also of trace
class) is given by%
\begin{equation}
\widehat{A}^{\ast}\psi=\sum_{j}\overline{\alpha_{j}}(\psi|\phi_{j})_{L^{2}%
}\psi_{j}. \label{tr4}%
\end{equation}
%ELENA
where the series is absolutely convergent in $L^{2}(\mathbb{R}^{n})$.

The following issue is an easy consequence of the spectral theorem for compact
self-adjoint operators.
%%%%%%%%%%%%%%%%%%%%%%%%%%%%

\begin{lemma}
\label{Lemma1}Let $\widehat{A}$ be a trace-class operator on $L^{2}%
(\mathbb{R}^{n})$.

(i) If $\widehat{A}$ is self-adjoint there exists a real sequence $(\alpha
_{j})_{j}\in\ell^{1}(\mathbb{N})$ and an orthonormal basis $(\psi_{j})_{j}$ of
$L^{2}(\mathbb{R}^{n})$ such that%
\begin{equation}
\widehat{A}\psi=\sum_{j}\alpha_{j}(\psi|\psi_{j})_{L^{2}}\psi_{j} \label{tr5}%
\end{equation}
for every $\psi\in L^{2}(\mathbb{R}^{n})$,
%ELENA
with absolute convergence in $L^{2}(\mathbb{R}^{n})$;
%%%%%%%%%%%%%%%%%%%%%%%%%%%%%

(ii) if $\widehat{A}\geq0$ then (\ref{tr5}) holds with $\alpha_{j}\geq0$ for
all $j$.
\end{lemma}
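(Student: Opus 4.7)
The plan is to invoke the classical Hilbert--Schmidt spectral theorem for compact self-adjoint operators, exactly as the sentence preceding the lemma advertises. First I would use the fact, recalled in the preceding discussion, that every trace-class operator is compact, so $\widehat{A}$ is a compact self-adjoint operator on the separable Hilbert space $L^{2}(\mathbb{R}^{n})$. The spectral theorem then produces a (possibly finite) orthonormal family of eigenvectors with nonzero real eigenvalues spanning $(\ker \widehat{A})^{\perp}$, on which $\widehat{A}$ acts diagonally, and on $\ker \widehat{A}$ the operator vanishes. Adjoining any orthonormal basis of $\ker \widehat{A}$ with associated eigenvalue $0$, I obtain a complete orthonormal basis $(\psi_{j})_{j}$ of $L^{2}(\mathbb{R}^{n})$ with $\widehat{A}\psi_{j}=\alpha_{j}\psi_{j}$ and $\alpha_{j}\in\mathbb{R}$.

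Next I would expand an arbitrary $\psi\in L^{2}(\mathbb{R}^{n})$ as $\psi=\sum_{j}(\psi|\psi_{j})_{L^{2}}\psi_{j}$ and apply $\widehat{A}$ term-by-term, using its boundedness to pass the operator through the sum; this directly yields formula (\ref{tr5}). The absolute convergence in $L^{2}(\mathbb{R}^{n})$ is then precisely the computation already displayed in the excerpt just after (\ref{tr3}), so the only remaining point is to check that $(\alpha_{j})_{j}\in \ell^{1}(\mathbb{N})$. For this I would exploit self-adjointness to write $\widehat{A}^{\ast}\widehat{A}=\widehat{A}^{2}$, so $|\widehat{A}|\psi_{j}=|\alpha_{j}|\psi_{j}$, and then conclude
\[
\sum_{j}|\alpha_{j}|=\sum_{j}(|\widehat{A}|\psi_{j}|\psi_{j})_{L^{2}}=\operatorname{Tr}|\widehat{A}|<\infty
\]
from the trace-class hypothesis (\ref{tr1}), which is valid for any orthonormal basis, in particular the eigenbasis just constructed.

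For part (ii) the work is essentially done: once (\ref{tr5}) holds with $\widehat{A}\psi_{j}=\alpha_{j}\psi_{j}$, the assumption $\widehat{A}\geq 0$ gives $\alpha_{j}=(\widehat{A}\psi_{j}|\psi_{j})_{L^{2}}\geq 0$ for every $j$. There is really no serious obstacle in this proof; the only mild point of care is to enlarge the eigenfamily produced by the spectral theorem to a full orthonormal basis of $L^{2}(\mathbb{R}^{n})$ by appending a basis of $\ker \widehat{A}$ (with associated eigenvalue $0$), so that the expansion (\ref{tr5}) genuinely holds for every $\psi\in L^{2}(\mathbb{R}^{n})$ and not merely on $(\ker\widehat{A})^{\perp}$.
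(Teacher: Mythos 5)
Your proof is correct and follows exactly the route the paper indicates, namely deducing the lemma from the spectral theorem for compact self-adjoint operators (the paper itself leaves the details to the reader with precisely this remark). The points you single out --- completing the eigenfamily by a basis of $\ker\widehat{A}$, and obtaining $(\alpha_{j})_{j}\in\ell^{1}$ from $|\widehat{A}|\psi_{j}=|\alpha_{j}|\psi_{j}$ together with (\ref{tr1}) --- are the right ones and are handled correctly.
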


%\begin{proof}
%In view of (\ref{tr3}) for every index $k$ we have $\widehat{A}\psi_{k}%
%=\alpha_{k}\phi_{k}$ and, similarly, by (\ref{tr4}), $\widehat{A}^{\ast}%
%\phi_{k}=\overline{\alpha_{k}}\psi_{k}$. It follows that $\widehat{A}^{\ast
%}\widehat{A}\psi_{k}=|\alpha_{k}|^{2}\psi_{k}$ and $\widehat{A}\widehat
%{A}^{\ast}\phi_{k}=|\alpha_{k}|^{2}\phi_{k}$; since $\widehat{A}^{\ast
%}\widehat{A}=\widehat{A}\widehat{A}^{\ast}=\widehat{A}^{2}$ it follows that
%both orthonormal bases $(\psi_{j})_{j}$ and $(\phi_{j})_{j}$ are complete
%families of eigenvectors of $\widehat{A}^{2}$ corresponding to the same
%eigenvalues, they must hence be identical: $(\psi_{j})_{j}=(\phi_{j})_{j}$.
%That the $\alpha_{j}$ are real is clear. (ii) If $\widehat{A}\geq0$ then
%$\widehat{A}$ is self-adjoint, hence (\ref{tr5}) holds. For each index $k$ we
%have $\widehat{A}\psi_{k}=\alpha_{k}\psi_{k}$ hence $(\widehat{A}\psi_{k}%
%|\psi_{k})_{L^{2}}=\alpha_{k}\geq0$.
%\end{proof}

Formula (\ref{tr5}) can be rewritten for short as%
\begin{equation}
\widehat{A}=\sum_{j}\alpha_{j}\widehat{\Pi}_{j} \label{tr6}%
\end{equation}
where $\widehat{\Pi}_{j}$ is a rank-one projector, namely the orthogonal
projection operator of $L^{2}(\mathbb{R}^{n})$ onto the one-dimensional
subspace $\mathbb{C}\psi_{j}$ generated by $\psi_{j}$.

Notice that the series \eqref{tr6} is absolutely convergent in $B(L^{2}%
(\mathbb{R}^{n}))$. Indeed, $\|\widehat{\Pi}_{j}\|_{B(L^{2})}=1$ for every $j$
and
%for any $M< N$, $M,N\in \mathbb{N}$,
%$$\sum_{j=M}^{N} \|\alpha_{j}\widehat{\Pi}_{j}\|_{B(L^2)}= \sum_{j=M}^{N} |\alpha_{j}|.
%$$%
\[
\sum_{j} \|\alpha_{j}\widehat{\Pi}_{j}\|_{B(L^{2})}\leq\sum_{j} |\alpha
_{j}|<\infty.
\]

\begin{proposition}
\label{Prop1} Let $\widehat{A}$ be a self-adjoint trace-class operator on
$L^{2}(\mathbb{R}^{n})$ as in \eqref{tr5} and $\eta>0$.

(i) The $\eta$-Weyl symbol $a$ of $\widehat{A}$ is given by
\begin{equation}
a=(2\pi\eta)^{n}\sum_{j}\alpha_{j}W_{\eta}\psi_{j} \label{tr7}%
\end{equation}
%ELENA
where the series converges absolutely in $L^{2}(\mathbb{R}^{2n})$ ;

%%%%%%%%%%%%%%
(ii) The twisted symbol $a_{\sigma,\eta}$ is given by%
\begin{equation}
a_{\sigma,\eta}=(2\pi\eta)^{n}\sum_{j}\alpha_{j}\operatorname*{Amb}%
\nolimits_{\eta}\psi_{j}. \label{tr8}%
\end{equation}
with absolute convergence in $L^{2}(\mathbb{R}^{2n})$.

(iii) In particular, the symbols $a$ and $a_{\sigma,\eta}$ are in
$L^{2}(\mathbb{R}^{2n})\cap L^{\infty}(\mathbb{R}^{2n})$.
\end{proposition}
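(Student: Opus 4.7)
The plan is to compute the $\eta$-Weyl symbol of a single rank-one projector and then sum, using Moyal's identity to control both $L^2$ and pointwise sizes. First, consider $\widehat{\Pi}_j\psi=(\psi|\psi_j)_{L^2}\psi_j$. By the defining relation (\ref{w1}), its $\eta$-Weyl symbol $a_j$ must satisfy $\langle a_j,W_\eta(\psi,\phi)\rangle=\langle\widehat{\Pi}_j\psi,\overline{\phi}\rangle=(\psi|\psi_j)(\psi_j|\phi)$ for all $\psi,\phi\in\mathcal{S}(\mathbb{R}^n)$. Polarizing Moyal's identity (\ref{Moyaleta}) (and using that $W_\eta\psi_j$ is real, together with $\eta>0$) yields $\int W_\eta(\psi,\phi)(z)W_\eta\psi_j(z)\,dz=(2\pi\eta)^{-n}(\psi|\psi_j)(\psi_j|\phi)$, so $a_j=(2\pi\eta)^n W_\eta\psi_j$.

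For the full statement of (i), I would establish absolute convergence of the series in $L^2(\mathbb{R}^{2n})$ via Moyal: since $\|\psi_j\|=1$, one has $\|W_\eta\psi_j\|_{L^2}^2=(2\pi\eta)^{-n}$, hence
\[
\sum_j\bigl\|\alpha_j(2\pi\eta)^n W_\eta\psi_j\bigr\|_{L^2}=(2\pi\eta)^{n/2}\sum_j|\alpha_j|<\infty.
\]
Call $b$ the resulting $L^2$-limit. To identify $b$ with the Weyl symbol $a$ of $\widehat{A}$, pair $b$ against $W_\eta(\psi,\phi)\in\mathcal{S}(\mathbb{R}^{2n})$ for arbitrary Schwartz $\psi,\phi$; the absolute $L^2$-convergence of the series justifies the interchange of sum and integral, and applying the previous step termwise gives
\[
\langle b,W_\eta(\psi,\phi)\rangle=\sum_j\alpha_j(\psi|\psi_j)(\psi_j|\phi)=(\widehat{A}\psi|\phi)=\langle\widehat{A}\psi,\overline{\phi}\rangle,
\]
where the middle equality is the representation (\ref{tr5}) combined with its $\ell^1$-absolute convergence. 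Hence $b=a$, which is (i).

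Part (ii) follows by applying the symplectic $\eta$-Fourier transform $F_{\sigma,\eta}$ termwise to (i): by (\ref{w5}), $F_{\sigma,\eta}W_\eta\psi_j=\mathrm{Amb}_\eta\psi_j$, and since $F_{\sigma,\eta}$ is an $L^2$-isometry with the normalization in (\ref{w4}), absolute $L^2$-convergence transfers. For (iii), $L^2$-membership is already established. The $L^\infty$-bound uses the pointwise estimates $|W_\eta\psi_j(z)|\leq(2\pi\eta)^{-n}\|\psi_j\|_{L^2}^2=(2\pi\eta)^{-n}$ (Cauchy-Schwarz in $y$ in (\ref{ww})) and $|\mathrm{Amb}_\eta\psi_j(z)|\leq(2\pi\eta)^{-n}$ (immediate from (\ref{defamb}) and unitarity of $T_\eta(z)$); summation yields $\|a\|_{L^\infty},\|a_{\sigma,\eta}\|_{L^\infty}\leq\sum_j|\alpha_j|<\infty$. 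No step is really deep; the only moderately delicate point is the identification $b=a$ via the weak pairing in the second paragraph, but the $\ell^1$-summability of $(\alpha_j)$ together with the Schwartz regularity of $W_\eta(\psi,\phi)$ makes the interchange routine.
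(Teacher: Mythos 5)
Your proof is correct and follows essentially the same route as the paper: reduce to the rank-one projectors, identify $(2\pi\eta)^n W_\eta\psi_j$ as the $\eta$-Weyl symbol of $\widehat{\Pi}_j$ (the paper gets this from the kernel-to-symbol formula applied to $K_j=\psi_j\otimes\overline{\psi_j}$ rather than your duality-plus-polarized-Moyal argument, but the two derivations are equivalent), then use Moyal's identity with $\ell^1$-summability for the absolute $L^2$ convergence, the relation $F_{\sigma,\eta}W_\eta\psi_j=\operatorname{Amb}_\eta\psi_j$ for (ii), and pointwise bounds for (iii). The only slip is a constant: Cauchy--Schwarz in $y$ in \eqref{ww} picks up a Jacobian factor $2^n$ from the substitution $u=x\pm\tfrac{1}{2}y$, so the sharp pointwise bound is $|W_\eta\psi_j(z)|\le(\pi\eta)^{-n}\Vert\psi_j\Vert_{L^2}^2$ rather than $(2\pi\eta)^{-n}\Vert\psi_j\Vert_{L^2}^2$ --- harmless for the $L^\infty$ conclusion.
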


\begin{proof}
The distributional kernel of the orthogonal projection $\widehat{\Pi}_{j}$ is
$K_{j}=\psi_{j}\otimes\overline{\psi_{j}}$ hence the Weyl symbol $a_{j}$ of
$\widehat{\Pi}_{j}$ is given by the usual formula
\[
a_{j}(z)=\int_{\mathbb{R}^{n}}e^{-\frac{i}{\eta}py}K_{j}(x+\tfrac{1}%
{2}y,x-\tfrac{1}{2}y)dy=(2\pi\eta)^{n}W_{\eta}\psi_{j}(z).
\]
%ELENA
the series \eqref{tr6} being absolutely convergent in $B(L^{2}(\mathbb{R}%
^{n}))$. Moyal's identity
\[
\Vert\alpha_{j}W_{\eta}\psi_{j}\Vert_{2}=|\alpha_{j}|\left(  \tfrac{1}%
{2\pi\eta}\right)  ^{\frac{n}{2}}\Vert\psi_{j}\Vert_{2}^{2}=\left(  \tfrac
{1}{2\pi\eta}\right)  ^{\frac{n}{2}}|\alpha_{j}|
\]
and the assumption $(\alpha_{j})_{j}\in\ell^{1}(\mathbb{N})$ guarantee that
the series in (\ref{tr7}) is absolutely convergent in $L^{2}(\mathbb{R}^{2n})$
and we infer that the symbol $a$ is in $L^{2}(\mathbb{R}^{2n})$. Similarly, by
H\"{o}lder's inequality,
\[
|W_{\eta}\psi_{j}(z)|\leq\tfrac{2^{2n}}{(2\pi\eta)^{n}}\Vert\psi_{j}\Vert
_{2}^{2}%
\]
for all $z\in\mathbb{R}^{2n}$ so that
\[
\Vert\alpha_{j}W_{\eta}\psi_{j}\Vert_{\infty}\leq\left(  \tfrac{2}{\pi\eta
}\right)  ^{n}|\alpha_{j}|
\]
and the series in (\ref{tr7}) is absolutely convergent in $L^{\infty
}(\mathbb{R}^{2n})$, too. This proves our claim (iii) for the symbol $a$.
%%%%%%%%%%%%%%%%%%
Formula (\ref{tr8}) follows since $W_{\eta}\psi_{j}$ and $\operatorname*{Amb}%
\nolimits_{\eta}\psi_{j}$ are symplectic $\eta$-Fourier transforms of each
other. Claim (iii) for $a_{\sigma,\eta}$ is obtained in a similar way.
\end{proof}

\begin{remark}
From Proposition \eqref{Prop1}, the functions $a$ and $a_{\sigma,\eta}$ are in
$L^{2}(\mathbb{R}^{2n})\cap L^{\infty}(\mathbb{R}^{2n})$. Notice that in
general the symbols $a$ and $a_{\sigma,\eta}$ are not in $L^{1}(\mathbb{R}%
^{2n})$. For example, choose $\widehat{A}=\widehat{{\Pi}}_{0}$, with
$\widehat{{\Pi}}_{0}$ the orthogonal projection onto a vector $\psi_{0}\in
L^{2}(\mathbb{R}^{n})\setminus(L^{1}(\mathbb{R}^{n})\cup\mathcal{F}%
L^{1}(\mathbb{R}^{n}))$.
\end{remark}

Recall that if $\widehat{A}$ is positive semidefinite and has trace equal\ to
one, it is called a \textit{density operator} and denoted by $\widehat{\rho}$.
We will from now on assume that all the concerned operators are self-adjoint
and of trace class, recalling from \eqref{notation1} that they can be written
as
\[
\widehat{\rho}=\sum_{j}\alpha_{j}\widehat{\Pi}_{j}=(2\pi\eta)^{n}%
\operatorname*{Op}\nolimits_{\eta}^{\mathrm{W}}(\rho)
\]
the real function $\rho$ being given by formula (\ref{density}). We are going
to determine explicit necessary and sufficient conditions on $\rho$ ensuring
the positivity of $\widehat{\rho}$. Let us first note the following result
which shows the sensitivity of density operators to changes in the value of
$\hbar$.

Let us now address the following question: for given $\psi\in L^{2}%
(\mathbb{R}^{n})$, can we find $\phi$ such that $W_{\eta}\phi=W\psi$ for
$\eta\neq\hbar$? The answer is negative:

\begin{proposition}
\label{Thm3}Let $\psi\in L^{1}(\mathbb{R}^{n})\cap L^{2}(\mathbb{R}%
^{n})\setminus\{0\}$ and $\eta\in\mathbb{R}\setminus\{0\}$, $\hbar>0$.

(i) There does not exist any $\phi\in L^{1}(\mathbb{R}^{n})\cap L^{2}%
(\mathbb{R}^{n})$ such that $W_{\eta}\phi=W\psi$ if $|\eta|\neq\hbar$.

(ii) Assume that there exist orthonormal systems $(\psi_{j})_{j\in\mathbb{N}}%
$, $(\phi_{j})_{j\in\mathbb{N}}$ of $L^{2}(\mathbb{R}^{n})$
%ELENA
%%%%%%%%%%%%%%%%
and nonnegative sequences $\alpha=(\alpha_{j})_{j\in\mathbb{N}},\ \beta
=(\beta_{j})_{j\in\mathbb{N}}\in\ell^{1}(\mathbb{N})$ such that
\begin{equation}
\sum_{j}\alpha_{j}W_{\eta}\psi_{j}=\sum_{j}\beta_{j}W\phi_{j} \label{abcp}%
\end{equation}
Then we must have
\[
\hbar^{n}\Vert\alpha\Vert_{\ell^{2}}^{2}=|\eta|^{n}\Vert\beta\Vert_{\ell^{2}%
}^{2}.
\]

\end{proposition}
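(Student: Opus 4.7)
The plan is to exploit two already-established facts: the marginal property (\ref{marginal}), which pins down $\|\psi\|_{2}$ and $\|\phi\|_{2}$, and Moyal's identity (\ref{Moyaleta}), whose explicit $|\eta|^{-n}$ prefactor detects the deformation parameter. Both parts then reduce to careful bookkeeping with $L^{2}$ norms.

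For part (i), I would argue by contradiction: suppose some $\phi\in L^{1}(\mathbb{R}^{n})\cap L^{2}(\mathbb{R}^{n})$ satisfies $W_{\eta}\phi=W\psi$. First, integrate this identity in the $x$ variable. Since both $\psi$ and $\phi$ lie in $L^{1}\cap L^{2}$, the first marginal formula in (\ref{marginal}) applies on each side and gives $|F_{\eta}\phi(p)|^{2}=|F_{\hbar}\psi(p)|^{2}$ almost everywhere. Integrating once more in $p$ and using that $F_{\eta}$ and $F_{\hbar}$ are unitary on $L^{2}(\mathbb{R}^{n})$ (cf.\ (\ref{Feta})) yields $\|\phi\|_{2}=\|\psi\|_{2}$, a strictly positive quantity since $\psi\neq 0$. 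Next, take the $L^{2}(\mathbb{R}^{2n})$ norm of both sides of $W_{\eta}\phi=W\psi$ and apply Moyal's identity (\ref{Moyal2eta}) with parameters $\eta$ and $\hbar$, respectively, to obtain
\[
\left(\tfrac{1}{2\pi|\eta|}\right)^{n}\|\phi\|_{2}^{4}=\left(\tfrac{1}{2\pi\hbar}\right)^{n}\|\psi\|_{2}^{4}.
\]
Combined with $\|\phi\|_{2}=\|\psi\|_{2}\neq 0$, this forces $|\eta|=\hbar$, contradicting the hypothesis $|\eta|\neq\hbar$.

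For part (ii), the plan is again to equate $L^{2}(\mathbb{R}^{2n})$-norms of the two sides of (\ref{abcp}). Both series converge absolutely in $L^{2}(\mathbb{R}^{2n})$ --- this is exactly the estimate established inside the proof of Proposition \ref{Prop1} using Moyal --- so the squared norms may be expanded as double sums with term-by-term swapping of summation and integration. The polarized Moyal identity (\ref{Moyaleta}) applied to the orthonormal system $(\psi_{j})$ gives
\[
(W_{\eta}\psi_{j},W_{\eta}\psi_{k})_{L^{2}}=\left(\tfrac{1}{2\pi|\eta|}\right)^{n}|(\psi_{j}|\psi_{k})|^{2}=\left(\tfrac{1}{2\pi|\eta|}\right)^{n}\delta_{jk},
\]
so the left-hand squared norm collapses to $(2\pi|\eta|)^{-n}\|\alpha\|_{\ell^{2}}^{2}$; the analogous computation on the right produces $(2\pi\hbar)^{-n}\|\beta\|_{\ell^{2}}^{2}$. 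Equating and cancelling the common factor $(2\pi)^{-n}$ yields precisely $\hbar^{n}\|\alpha\|_{\ell^{2}}^{2}=|\eta|^{n}\|\beta\|_{\ell^{2}}^{2}$.

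I do not foresee a serious obstacle; the entire argument is essentially an application of two norm identities already proved in the paper. The only point requiring any care is the term-by-term expansion in part (ii), but absolute convergence of both series in $L^{2}(\mathbb{R}^{2n})$ (from $(\alpha_{j}),(\beta_{j})\in\ell^{1}$ combined with $\|W_{\eta}\psi_{j}\|_{2}=(2\pi|\eta|)^{-n/2}$) justifies it immediately. Equivalently, and perhaps more cleanly, one can observe that $\{(2\pi|\eta|)^{n/2}W_{\eta}\psi_{j}\}_{j}$ and $\{(2\pi\hbar)^{n/2}W\phi_{j}\}_{j}$ are orthonormal systems in $L^{2}(\mathbb{R}^{2n})$ by Moyal, so Parseval applied to each side of (\ref{abcp}) delivers (ii) in one line.
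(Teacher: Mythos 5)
Your argument is correct and coincides with the paper's own proof: part (i) via the first marginal property plus Moyal's identity to force $|\eta|=\hbar$, and part (ii) by squaring \eqref{abcp}, integrating, and using Moyal together with orthonormality. The closing observation that Moyal makes $\{(2\pi|\eta|)^{n/2}W_{\eta}\psi_{j}\}_{j}$ an orthonormal system, so that Parseval gives (ii) directly, is a clean reformulation of the same computation.
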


\begin{proof}
Observe that the series in (\ref{abcp}) are absolutely convergent in
$L^{2}(\mathbb{R}^{2n})$. (i) Assume that $W_{\eta}\phi=W\psi$; then, using
the first marginal property (\ref{marginal}),%
\[
|F_{\eta}\phi(p)|^{2}=\int_{\mathbb{R}^{n}}W_{\eta}\phi(x,p)dx=\int%
_{\mathbb{R}^{n}}W\psi(x,p)dx=|F_{\hbar}\psi(p)|^{2}%
\]
hence $\phi$ and $\psi$ must have the same $L^{2}$-norm: $||\phi||=||\psi||$
in view of Parseval's equality. On the other hand, using the Moyal identity
(\ref{Moyaleta}) for, respectively, $W\psi$ and $W_{\eta}\phi$,\ the equality
$W\psi=W_{\eta}\phi$ implies that%
\begin{align*}
\int_{\mathbb{R}^{2n}}W\psi(z)^{2}dz  &  =\left(  \tfrac{1}{2\pi\hbar}\right)
^{n}||\psi||^{4}\\
\int_{\mathbb{R}^{2n}}W_{\eta}\phi(z)^{2}dz  &  =\left(  \tfrac{1}{2\pi|\eta
|}\right)  ^{n}||\phi||^{4}%
\end{align*}
hence we must have $|\eta|=\hbar$.

(ii) Squaring both sides of \eqref{abcp} and integrating over $\mathbb{R}%
^{2n}$ we get, using again Moyal's identity and the orthonormality of the
vectors $\psi_{j}$ and $\phi_{j}$,
\[
\frac{1}{(2\pi|\eta|)^{n}}\sum_{j}\alpha_{j}^{2}=\frac{1}{(2\pi\hbar)^{n}}%
\sum_{j}\beta_{j}^{2},
\]
hence our claim.
\end{proof}

\begin{remark}
Assume in particular that $\beta_{1}=1$ and $\beta_{j}=0$ for $j\geq2$. Then
(ii) tells us that if $\sum_{j}\alpha_{j}W_{\eta}\psi_{j}=W\phi$ then we must
have $\hbar^{n}\Vert\alpha\Vert_{\ell^{2}}^{2}=|\eta|^{n}$. Assume that
$\sum_{j}\alpha_{j}=1$; then $\Vert\alpha\Vert_{\ell^{2}}^{2}\leq1$ hence we
must have $|\eta|\leq\hbar$.
\end{remark}

\section{Positive Trace Class Operators\label{secKLM}}

\subsection{A general positivity result for trace class operators}

We are now going to give an integral description of the positivity of a trace
class operator on $L^{2}(\mathbb{R}^{n})$ of which Theorem \ref{prop2} can be
viewed as a discretized version.

Let us begin by stating a general result:

\begin{lemma}
\label{Lemmatrab}Let $\widehat{A}=\operatorname*{Op}_{\eta}^{\mathrm{W}}(a)$
be a trace-class operator on $L^{2}(\mathbb{R}^{n})$, with $\eta>0$. We have
$\widehat{A}\geq0$ if and only $\operatorname*{Tr}(\widehat{A}\widehat{B}%
)\geq0$ for every positive trace class operator $\widehat{B}\in\mathcal{L}%
_{1}(L^{2}(\mathbb{R}^{n}))$.
\end{lemma}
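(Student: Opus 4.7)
The plan is to prove the two implications separately; both are straightforward consequences of the spectral decomposition (Lemma \ref{Lemma1}) combined with the cyclicity of the trace, and the Weyl-symbol setup plays no role in the argument itself (it simply fixes the class of operators under consideration).

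For the necessity direction, assuming $\widehat{A}\geq 0$, I would take an arbitrary positive trace-class $\widehat{B}$ and apply Lemma \ref{Lemma1}(ii) to write
\begin{equation*}
\widehat{B}=\sum_j \beta_j \widehat{\Pi}_j,\qquad \beta_j\geq 0,\ (\beta_j)\in\ell^1,
\end{equation*}
where $\widehat{\Pi}_j$ is the orthogonal projector onto $\mathbb{C}\phi_j$ for some orthonormal basis $(\phi_j)$ of $L^2(\mathbb{R}^n)$. Computing the trace of $\widehat{A}\widehat{B}$ in this basis yields
\begin{equation*}
\operatorname{Tr}(\widehat{A}\widehat{B})=\sum_j \beta_j (\widehat{A}\phi_j|\phi_j)_{L^2}\geq 0,
\end{equation*}
each summand being nonnegative by the assumption $\widehat{A}\geq 0$; absolute convergence follows from $|(\widehat{A}\phi_j|\phi_j)_{L^2}|\leq \|\widehat{A}\|_{B(L^2)}$ and $(\beta_j)\in\ell^1$. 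Alternatively, writing $\widehat{B}=\widehat{B}^{1/2}\widehat{B}^{1/2}$ with $\widehat{B}^{1/2}$ Hilbert--Schmidt and invoking cyclicity gives $\operatorname{Tr}(\widehat{A}\widehat{B})=\operatorname{Tr}(\widehat{B}^{1/2}\widehat{A}\widehat{B}^{1/2})\geq 0$ because the sandwiched operator is positive.

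For the sufficiency direction, I would test the hypothesis on rank-one orthogonal projectors. For any $\psi\in L^2(\mathbb{R}^n)\setminus\{0\}$, the projector $\widehat{\Pi}_\psi$ onto $\mathbb{C}\psi$ is a positive trace-class operator and a direct computation gives $\operatorname{Tr}(\widehat{A}\widehat{\Pi}_\psi)=(\widehat{A}\psi|\psi)_{L^2}/\|\psi\|^2$. The hypothesis then forces $(\widehat{A}\psi|\psi)_{L^2}\geq 0$ for every $\psi\in L^2(\mathbb{R}^n)$, i.e.\ $\widehat{A}\geq 0$ (with self-adjointness automatic, since a bounded operator with real expectation values on all vectors is self-adjoint).

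There is no serious obstacle here: the lemma is a standard duality statement reflecting the fact that the positive rank-one projectors already separate positivity, while every positive trace-class operator is a nonnegative combination of such projectors. The only implicit point worth flagging is that $\operatorname{Tr}(\widehat{A}\widehat{B})$ is always well defined since the product of two trace-class operators is trace class; this ensures both members of the equivalence make sense without any additional hypothesis on $a$.
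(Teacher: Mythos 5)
Your proof is correct, and it reaches the same two pillars as the paper's argument --- the decomposition of a positive trace-class $\widehat{B}$ as $\sum_j\beta_j\widehat{\Pi}_j$ with $\beta_j\geq 0$ (Lemma \ref{Lemma1}(ii)) for necessity, and testing against rank-one projectors for sufficiency --- but it does so entirely at the operator level, whereas the paper routes everything through the Weyl calculus. Concretely, the paper writes $\widehat{B}=\operatorname*{Op}_{\eta}^{\mathrm{W}}(b)$ with $b=(2\pi\eta)^{n}\sum_j\beta_j W_\eta\psi_j$, invokes the Hilbert--Schmidt trace formula $\operatorname*{Tr}(\widehat{A}\widehat{B})=\int_{\mathbb{R}^{2n}}a(z)b(z)\,dz$, and reduces both implications to the sign of $\int a(z)W_\eta\psi(z)\,dz$; your version replaces this with the direct computation $\operatorname*{Tr}(\widehat{A}\widehat{B})=\sum_j\beta_j(\widehat{A}\phi_j|\phi_j)_{L^2}$ and the identity $\operatorname*{Tr}(\widehat{A}\widehat{\Pi}_\psi)=(\widehat{A}\psi|\psi)_{L^2}/\|\psi\|^2$. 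What your route buys is economy and generality: you never need $a,b\in L^2(\mathbb{R}^{2n})$ or the formula \eqref{trab}, and the self-adjointness remark and the $\widehat{B}^{1/2}$ alternative are both sound. What the paper's route buys is that the symbol-level identity $\int a\,W_\eta\psi\,dz\geq 0$ is exactly the form needed immediately afterwards in Theorem \ref{prop2} and in the proof of the KLM conditions, so the symbol detour is not wasted there even though it is logically unnecessary for the lemma itself. No gaps.
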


\begin{proof}
Since $\mathcal{L}_{1}(L^{2}(\mathbb{R}^{n}))$ is itself an algebra the
product $\widehat{A}\widehat{B}$ is indeed of trace class so the condition
$\operatorname*{Tr}(\widehat{A}\widehat{B})\geq0$ makes sense; setting
$\widehat{B}=\operatorname*{Op}_{\eta}^{\mathrm{W}}(b)$ we have%
\[
b(z)=(2\pi\eta)^{n}\sum_{j}\beta_{j}W\psi_{j}%
\]
where $(\beta_{j})\in\ell^{1}(\mathbb{N})$ with $\beta_{j}\geq0$ and $\psi
_{j}$ an orthonormal basis for $L^{2}(\mathbb{R}^{n})$. Observing that trace
class operators are also Hilbert--Schmidt operators, we have \cite[Prop.
284]{Birkbis} since $a,b\in L^{2}(\mathbb{R}^{2n})$
\begin{equation}
\operatorname*{Tr}(\widehat{A}\widehat{B})=\int_{\mathbb{R}^{2n}}a(z)b(z)dz
\label{trab}%
\end{equation}
and hence
\[
\operatorname*{Tr}(\widehat{A}\widehat{B})=(2\pi\eta)^{n}\sum_{j}\beta_{j}%
\int_{\mathbb{R}^{2n}}a(z)W_{\eta}\psi_{j}(z)dz
\]
(the interchange of integral and series is justified by Fubini's Theorem).
Assume that $\operatorname*{Tr}(\widehat{A}\widehat{B})\geq0$. It is enough to
check the positivity of $\widehat{A}$ on unit vectors $\psi$ in $L^{2}%
(\mathbb{R}^{n})$. Choosing all the $\beta_{j}=0$ except $\beta_{1}$ and
setting $\psi_{1}=\psi$ we have
\begin{equation}
\int_{\mathbb{R}^{2n}}a(z)W\psi(z)dz\geq0; \label{ouap}%
\end{equation}
since we can choose $\psi\in L^{2}(\mathbb{R}^{n})$ arbitrarily, this means
that we have $\widehat{A}\geq0$. If, conversely, we have $\widehat{A}\geq0$
then (\ref{ouap}) holds for all $\psi_{j}$ hence $\operatorname*{Tr}%
(\widehat{A}\widehat{B})\geq0$.
\end{proof}

Let us now prove:

\begin{theorem}
\label{prop2}Let $\eta>0$ and $\widehat{A}=\operatorname*{Op}_{\eta
}^{\mathrm{W}}(a)$ be a trace-class operator. We have $\widehat{A}\geq0$ if
and only if
\begin{equation}
\int_{\mathbb{R}^{2n}}F_{\sigma,\eta}a(z)\left(  \int_{\mathbb{R}^{2n}%
}e^{-\frac{i}{2\eta}\sigma(z,z^{\prime})}c(z^{\prime}-z)\overline{c(z^{\prime
})}dz^{\prime}\right)  dz\geq0,
\end{equation}
for all $c\in L^{2}(\mathbb{R}^{2n})$.
\end{theorem}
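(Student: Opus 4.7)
The strategy is to reduce, via Lemma~\ref{Lemmatrab}, the positivity of $\widehat{A}$ to the non-negativity of $\operatorname{Tr}(\widehat{A}\widehat{B})$ as $\widehat{B}$ ranges over all positive trace-class operators, then parametrize $\widehat{B}=\widehat{C}^{\ast}\widehat{C}$ with $\widehat{C}$ Hilbert--Schmidt and rewrite the resulting trace as an explicit phase-space integral involving $F_{\sigma,\eta}a$. Every positive $\widehat{B}\in\mathcal{L}_{1}(L^{2}(\mathbb{R}^{n}))$ admits a Hilbert--Schmidt square root $\widehat{C}=\widehat{B}^{1/2}$, and conversely $\widehat{C}^{\ast}\widehat{C}$ is positive and trace-class for every $\widehat{C}\in\mathcal{L}_{2}(L^{2}(\mathbb{R}^{n}))$. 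The Weyl correspondence is an isometry between $\mathcal{L}_{2}$ and $L^{2}(\mathbb{R}^{2n})$, and the symplectic $\eta$-Fourier transform $F_{\sigma,\eta}$ is a unitary involution on $L^{2}(\mathbb{R}^{2n})$, so the twisted symbol $c:=c_{\sigma,\eta}$ of an arbitrary Hilbert--Schmidt operator $\widehat{C}$ ranges over the whole of $L^{2}(\mathbb{R}^{2n})$.

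To compute $\operatorname{Tr}(\widehat{A}\widehat{C}^{\ast}\widehat{C})$ in terms of $F_{\sigma,\eta}a$ and $c$, I would use the integrated Weyl representation
\[
\operatorname{Op}\nolimits_{\eta}^{\mathrm{W}}(x)=\Big(\tfrac{1}{2\pi\eta}\Big)^{n}\int_{\mathbb{R}^{2n}}x_{\sigma,\eta}(z)\,T_{\eta}(z)\,dz,
\]
which follows from Fourier inversion $x=F_{\sigma,\eta}x_{\sigma,\eta}$ combined with the definition of $\operatorname{Op}_{\eta}^{\mathrm{W}}$ and formula \eqref{w3}. Together with $T_{\eta}(z)^{\ast}=T_{\eta}(-z)$, the Heisenberg composition law $T_{\eta}(z_{1})T_{\eta}(z_{2})=e^{\frac{i}{2\eta}\sigma(z_{1},z_{2})}T_{\eta}(z_{1}+z_{2})$, and the trace identity $\operatorname{Tr}T_{\eta}(u)=(2\pi\eta)^{n}\delta(u)$, this writes $\operatorname{Tr}(\widehat{A}\widehat{C}^{\ast}\widehat{C})$ as a triple integral against $a_{\sigma,\eta}(z_{1})\overline{c(-z_{2})}c(z_{3})$. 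The delta function collapses the $z_{3}$-integration to $z_{3}=-z_{1}-z_{2}$; the three quadratic phases sum to $e^{i\sigma(z_{1},z_{2})/(2\eta)}$ after using $\sigma(z,z)=0$ and antisymmetry of $\sigma$; and the substitution $z_{2}\mapsto-z_{2}$ converts the expression into
\[
\operatorname{Tr}(\widehat{A}\widehat{C}^{\ast}\widehat{C})=\Big(\tfrac{1}{2\pi\eta}\Big)^{2n}\int_{\mathbb{R}^{2n}} F_{\sigma,\eta}a(z)\left(\int_{\mathbb{R}^{2n}} e^{-\frac{i}{2\eta}\sigma(z,z')}c(z'-z)\overline{c(z')}\,dz'\right)dz.
\]

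Since this is a strictly positive multiple of the integral appearing in the statement, and $c$ ranges over all of $L^{2}(\mathbb{R}^{2n})$ as $\widehat{C}$ varies over the Hilbert--Schmidt operators, both implications of the theorem follow simultaneously. Absolute convergence of the inner integral is immediate from Cauchy--Schwarz (pointwise bound $\|c\|_{L^{2}}^{2}$) and of the outer integral from $F_{\sigma,\eta}a\in L^{2}\cap L^{\infty}$ by Proposition~\ref{Prop1}(iii) together with the fact that the inner integral, being a Wigner-type transform of $c$, belongs to $L^{2}(\mathbb{R}^{2n})$. The main technical obstacle is the algebraic bookkeeping of the three quadratic phases produced by the threefold application of the Heisenberg composition law; once one verifies that their sum collapses to $\sigma(z_{1},z_{2})$ under the constraint $z_{1}+z_{2}+z_{3}=0$, the remainder of the argument is a routine change of variables.
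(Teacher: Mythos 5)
Your proposal follows the same skeleton as the paper's proof --- reduce via Lemma \ref{Lemmatrab} to the non-negativity of $\operatorname{Tr}(\widehat{A}\widehat{B})$ for positive trace-class $\widehat{B}$, factor $\widehat{B}=\widehat{C}^{\ast}\widehat{C}$ with $\widehat{C}$ Hilbert--Schmidt, and use the surjectivity of the (twisted) symbol map onto $L^{2}(\mathbb{R}^{2n})$ to get both implications at once --- but you derive the central identity by a different route. The paper gets there by combining the trace formula $\operatorname{Tr}(\widehat{A}\widehat{B})=\int a(z)b(z)\,dz$ (formula \eqref{trab}), Plancherel for $F_{\sigma,\eta}$ (formula \eqref{trab2bis}), and the already-stated twisted convolution formula \eqref{twist1} for the twisted symbol of a product of Weyl operators; you instead expand each factor in the integrated Weyl representation and evaluate the trace of a triple product of Heisenberg operators using $\operatorname{Tr}T_{\eta}(u)=(2\pi\eta)^{n}\delta(u)$. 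Your phase bookkeeping is correct (the three quadratic phases do collapse to $e^{\frac{i}{2\eta}\sigma(z_{1},z_{2})}$ on the diagonal $z_{1}+z_{2}+z_{3}=0$, and the resulting double integral matches the statement after the substitution $z_{2}\mapsto -z_{2}$), so your final formula agrees with the paper's. What your route buys is self-containedness: you do not need to quote the composition formula. What it costs is rigor at exactly that point: with $a_{\sigma,\eta}\in L^{2}\cap L^{\infty}$ and $c\in L^{2}$ the triple integral is not absolutely convergent, so the $\delta$-function manipulation is formal and must be justified by an oscillatory-integral or density argument --- which amounts to proving \eqref{twist1}, the very fact the paper simply cites. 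The convergence remarks you append (Cauchy--Schwarz for the inner integral, $F_{\sigma,\eta}a\in L^{2}\cap L^{\infty}$ from Proposition \ref{Prop1}, the inner integral being an ambiguity-type transform of $c$ and hence in $L^{2}$) are correct and essentially make explicit what the paper leaves implicit.
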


\begin{proof}
In view of Lemma \ref{Lemmatrab} above we have $\widehat{A}\geq0$ if and only
if $\operatorname*{Tr}(\widehat{A}\widehat{B})\geq0$ for all positive
$\widehat{B}=\operatorname*{Op}_{\eta}^{\mathrm{W}}(b)\in\mathcal{L}_{1}%
(L^{2}(\mathbb{R}^{n}))$ that is (formula (\ref{trab}))%
\begin{equation}
\int_{\mathbb{R}^{2n}}a(z)b(z)dz=(a|b)_{L^{2}(\mathbb{R}^{2n})}\geq0.
\label{trab2}%
\end{equation}
(Recall that Weyl symbols of self-adjoint operators are real). Using
Plancherel's Theorem
\begin{equation}
(a|b)_{L^{2}(\mathbb{R}^{2n})}=\left(  \tfrac{1}{2\pi\eta}\right)
^{n}(F_{\sigma,\eta}a|F_{\sigma,\eta}b)_{L^{2}(\mathbb{R}^{2n})}.
\label{trab2bis}%
\end{equation}
Since $\widehat{B}\geq0$ there exists $\widehat{C}\in\mathcal{L}_{2}%
(L^{2}(\mathbb{R}^{n}))$ such that $\widehat{B}=\widehat{C}^{\ast}\widehat{C}$
and hence, setting $\widehat{C}=\operatorname*{Op}_{\eta}^{\mathrm{W}}(c)$
(recall that $\widehat{C}^{\ast}=\operatorname*{Op}_{\eta}^{\mathrm{W}}%
(\bar{c})$), by the composition formula for Weyl operators \eqref{twist1},
\begin{align*}
F_{\sigma,\eta}b(z)  &  =\left(  \tfrac{1}{2\pi\eta}\right)  ^{n}%
\int_{\mathbb{R}^{2n}}e^{\frac{i}{2\eta}\sigma(z,z^{\prime})}F_{\sigma,\eta
}\bar{c}(z-z^{\prime})F_{\sigma,\eta}c(z^{\prime})dz^{\prime}\\
&  =\left(  \tfrac{1}{2\pi\eta}\right)  ^{n}\int_{\mathbb{R}^{2n}}e^{\frac
{i}{2\eta}\sigma(z,z^{\prime})}\overline{F_{\sigma,\eta}{c}}(z^{\prime
}-z)F_{\sigma,\eta}c(z^{\prime})dz^{\prime}.
\end{align*}
Vice-versa, take any function $c\in L^{2}(\mathbb{R}^{2n})$ then
$\widehat{C}=\operatorname*{Op}_{\eta}^{\mathrm{W}}(c)$ is a Hilbert-Schmidt
operator and $\widehat{B}=\widehat{C}^{\ast}\widehat{C}$ is a positive
operator. Hence, using the fact that the operator $F_{\sigma,\eta}$ is a
topological automorphism of $L^{2}(\mathbb{R}^{2n})$, condition (\ref{trab2})
and \eqref{trab2bis} are equivalent to%
\begin{equation}
\int_{\mathbb{R}^{2n}}F_{\sigma,\eta}a(z)\left(  \int_{\mathbb{R}^{2n}%
}e^{-\frac{i}{2\eta}\sigma(z,z^{\prime})}c(z^{\prime}-z)\overline{c(z^{\prime
})}dz^{\prime}\right)  dz\geq0,
\end{equation}
for every $c\in L^{2}(\mathbb{R}^{2n})$, as claimed.
\end{proof}

\subsection{Proof of the KLM condition}

We are now going to prove the KLM conditions, that is Theorem \ref{Prop2}. We
will need the following classical result from linear algebra. It says that the
entrywise product of two positive semidefinite matrices is also positive semidefinite.

\begin{lemma}
[Schur]\label{lemmaschur}Let $M_{(N)}=(M_{jk})_{1\leq j,k\leq N}$ be the
Hadamard product $M_{(N)}^{\prime}\circ M_{(N)}^{\prime\prime}$ of the
matrices $M_{(N)}^{\prime}=(M_{jk}^{\prime})_{1\leq j,k\leq N}$ and
$M_{(N)}^{\prime\prime}=(M_{jk}^{\prime\prime})_{1\leq j,k\leq N}$:
$M_{(N)}=(M_{jk}^{\prime}M_{jk}^{\prime\prime})_{1\leq j,k\leq N}$. If
$M_{(N)}^{\prime}$ and $M_{(N)}^{\prime\prime}$ are positive semidefinite then
so is $M_{(N)}$.
\end{lemma}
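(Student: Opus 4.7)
The plan is to prove the Schur product theorem by exhibiting the Hadamard product as a Gram matrix, using the factorization of positive semidefinite matrices. I would organize the argument in three short steps.

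First, I would recall that a Hermitian matrix $M$ of size $N\times N$ is positive semidefinite if and only if it can be written as $M=B B^{\ast}$ for some $N\times r$ matrix $B$, equivalently $M_{jk}=\langle b_{k},b_{j}\rangle_{\mathbb{C}^{r}}$ where $b_{j}$ is the $j$-th row of $B$. (This follows from the spectral theorem: decompose $M=\sum_{\ell}\lambda_{\ell}u_{\ell}u_{\ell}^{\ast}$ with $\lambda_{\ell}\geq 0$ and set $B=(\sqrt{\lambda_{\ell}}\,u_{\ell})_{\ell}$.) Apply this to both factors: write $M'_{(N)}=BB^{\ast}$ and $M''_{(N)}=CC^{\ast}$, so that
\[
M'_{jk}=\sum_{\ell}B_{j\ell}\overline{B_{k\ell}},\qquad M''_{jk}=\sum_{m}C_{jm}\overline{C_{km}}.
\]

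Next, I would multiply these two expressions to obtain, for each $1\le j,k\le N$,
\[
M_{jk}=M'_{jk}M''_{jk}=\sum_{\ell,m}B_{j\ell}C_{jm}\,\overline{B_{k\ell}C_{km}}=\langle w_{k},w_{j}\rangle,
\]
where $w_{j}\in\mathbb{C}^{r\cdot s}$ is the vector with components $(B_{j\ell}C_{jm})_{\ell,m}$. Thus $M_{(N)}$ is the Gram matrix of the vectors $w_{1},\dots,w_{N}$, hence positive semidefinite. Conceptually, this identifies $M'_{(N)}\circ M''_{(N)}$ as the principal submatrix of the Kronecker product $M'_{(N)}\otimes M''_{(N)}$ indexed by the diagonal pairs $(j,j)$; since the Kronecker product of two positive semidefinite matrices is positive semidefinite and principal submatrices inherit this property, the conclusion follows.

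Finally, to close the proof one checks positivity directly: for any $\zeta=(\zeta_{j})\in\mathbb{C}^{N}$,
\[
\sum_{j,k}\zeta_{j}\overline{\zeta_{k}}\,M_{jk}=\sum_{\ell,m}\Big|\sum_{j}\zeta_{j}B_{j\ell}C_{jm}\Big|^{2}\geq 0,
\]
which concludes the argument. There is no real obstacle here: the only subtle point is recognising that the double sum reorganises into a sum of squared moduli, which is precisely what the Gram-matrix reformulation makes transparent.
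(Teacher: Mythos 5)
Your proof is correct and complete. Note, however, that the paper does not actually prove this lemma: it simply cites Bapat's book for the Schur product theorem, so there is no argument in the text to compare yours against. What you supply is the standard self-contained proof: factor each positive semidefinite matrix as a Gram matrix, $M'_{jk}=\sum_{\ell}B_{j\ell}\overline{B_{k\ell}}$ and $M''_{jk}=\sum_{m}C_{jm}\overline{C_{km}}$, observe that the entrywise product is then the Gram matrix of the vectors $w_{j}=(B_{j\ell}C_{jm})_{\ell,m}$, and verify positivity of the quadratic form by reorganising the double sum into $\sum_{\ell,m}\bigl|\sum_{j}\zeta_{j}B_{j\ell}C_{jm}\bigr|^{2}\geq 0$. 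The remark identifying the Hadamard product with a principal submatrix of the Kronecker product $M'_{(N)}\otimes M''_{(N)}$ is a nice conceptual bonus, and either of your two closing arguments (Gram matrix or Kronecker product plus restriction to a principal submatrix) suffices on its own. The only implicit hypothesis you use is that positive semidefinite matrices here are Hermitian, which is the convention in force throughout the paper, so there is no gap.
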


\begin{proof}
See for instance Bapat \cite{Bapat}.
\end{proof}

We have now all the instruments to prove the KLM conditions.

\begin{proof}
[Proof of Theorem \ref{Prop2}]Let us first show that the conditions (i)--(ii)
are necessary. Assume that $\widehat{A}\geq0$. Since $a\in L^{1}%
(\mathbb{R}^{2n})$, the Riemann-Lebesgue Lemma gives that $a_{\Diamond}$ is
continuous. In view of Lemma \ref{Lemma1} and formula (\ref{tr7}) in
Proposition \ref{Prop1}\ we have
\begin{equation}
a=(2\pi\eta)^{n}\sum_{j}\alpha_{j}W_{\eta}\psi_{j}%
\end{equation}
for an orthonormal basis $\{\psi_{j}\}$ in $L^{2}(\mathbb{R}^{n})$, the
coefficients $\alpha_{j}$ being $\geq0$ and in $\ell^{1}(\mathbb{N})$. It is
thus sufficient to show that the Wigner transform $W_{\eta}\psi$ of an
arbitrary $\psi\in L^{2}(\mathbb{R}^{n})$ is of $\eta$-positive type. This
amounts to show that for all $(z_{1},...,z_{N})\in(\mathbb{R}^{2n})^{N}$ and
all $(\zeta_{1},...,\zeta_{N})\in\mathbb{C}^{N}$ we have
\begin{equation}
I_{N}(\psi)=\sum_{1\leq j,k\leq N}\zeta_{j}\overline{\zeta_{k}}e^{-\frac
{i}{2\eta}\sigma(z_{j},z_{k})}F_{\sigma,\eta}W_{\eta}\psi(z_{j}-z_{k})\geq0
\label{ineq121}%
\end{equation}
for every complex vector $(\zeta_{1},...,\zeta_{N})\in\mathbb{C}^{N}$ and
every sequence $(z_{1},...,z_{N})\in(\mathbb{R}^{2n})^{N}$. Since the $\eta
$-Wigner distribution $W_{\eta}\psi$ and the $\eta$-ambiguity function are
obtained from each other by the symplectic $\eta$-Fourier transform
$F_{\sigma,\eta}$ we have%
\[
I_{N}(\psi)=\sum_{1\leq j,k\leq N}\zeta_{j}\overline{\zeta_{k}}e^{-\frac
{i}{2\eta}\sigma(z_{j},z_{k})}\operatorname*{Amb}\nolimits_{\eta}\psi
(z_{j}-z_{k}).
\]
Let us prove that
\begin{equation}
I_{N}(\psi)=\left(  \tfrac{1}{2\pi\eta}\right)  ^{n}||\sum\nolimits_{1\leq
j\leq N}\zeta_{j}T_{\eta}(-z_{j})\psi||_{L^{2}}^{2}; \label{equin12}%
\end{equation}
the inequality (\ref{ineq121}) will follow. Taking into account the fact that
$T_{\eta}(-z_{k})^{\ast}=T_{\eta}(z_{k})$ and using the familiar relation
\cite{Folland,Birk,Birkbis}
\begin{equation}
T_{\eta}(z_{k}-z_{j})=e^{-\frac{i}{2\eta}\sigma(z_{j},z_{k})}T_{\eta}%
(z_{k})T_{\eta}(-z_{j}) \label{tzotzo12}%
\end{equation}
we have, expanding the square in the right-hand side of (\ref{equin12}),
\begin{align*}
||\sum_{1\leq j\leq N}\zeta_{j}T_{\eta}(-z_{j})\psi||_{L^{2}}^{2}  &
=\sum_{1\leq j,k\leq N}\zeta_{j}\overline{\zeta_{k}}(T_{\eta}(-z_{j}%
)\psi|T_{\eta}(-z_{k})\psi)_{L^{2}}\\
&  =\sum_{1\leq j,k\leq N}\zeta_{j}\overline{\zeta_{k}}(T_{\eta}(z_{k}%
)T_{\eta}(-z_{j})\psi|\psi)_{L^{2}}\\
&  =\sum_{1\leq j,k\leq N}\zeta_{j}\overline{\zeta_{k}}e^{-\tfrac{i}{2\eta
}\sigma(z_{j},z_{k})}(T_{\eta}(z_{k}-z_{j})\psi|\psi)_{L^{2}}\\
&  =\left(  2\pi\eta\right)  ^{n}\sum_{1\leq j,k\leq N}\zeta_{j}%
\overline{\zeta_{k}}e^{-\tfrac{i}{2\eta}\sigma(z_{j},z_{k})}%
\operatorname*{Amb}\nolimits_{\eta}\psi(z_{j}-z_{k})
\end{align*}
proving the equality (\ref{equin12}). Let us now show that, conversely, the
conditions (i) and (ii) are sufficient, \textit{i.e.} that they imply that
$(\widehat{A}\psi|\psi)_{L^{2}}\geq0$ for all $\psi\in L^{2}(\mathbb{R}^{n})$;
equivalently (see formula (\ref{w1}))
\begin{equation}
\int_{\mathbb{R}^{2n}}a(z)W_{\eta}\psi(z)dz\geq0 \label{integraltoprove}%
\end{equation}
for $\psi\in L^{2}(\mathbb{R}^{n})$. Let us set, as above,%
\begin{equation}
\Lambda_{jk}^{\prime}=e^{\frac{i}{2\eta}\sigma(z_{j},z_{k})}a_{\sigma,\eta
}(z_{j}-z_{k})
\end{equation}
where $z_{j}$ and $z_{k}$ are arbitrary elements of $\mathbb{R}^{2n}$. To say
that $a_{\sigma,\eta}$ is of $\eta$\textit{-}positive type means that the
matrix $\Lambda^{\prime}=(\Lambda_{jk}^{\prime})_{1\leq j,k\leq N}$ is
positive semidefinite; choosing $z_{k}=0$ and setting $z_{j}=z$ this means
that every matrix $(a_{\sigma,\eta}(z))_{1\leq j,k\leq N}$ is positive
semidefinite. Setting
\begin{align*}
\Gamma_{jk}  &  =e^{\frac{i}{2\eta}\sigma(z_{j},z_{k})}F_{\sigma,\eta}W_{\eta
}\psi(z_{j}-z_{k})\\
&  =e^{\frac{i}{2\eta}\sigma(z_{j},z_{k})}\operatorname*{Amb}\nolimits_{\eta
}\psi(z_{j}-z_{k})
\end{align*}
the matrix $\Gamma_{(N)}=(\Gamma_{jk})_{1\leq j,k\leq N}$ is positive
semidefinite. Let us now write
\[
M_{jk}=\operatorname*{Amb}\nolimits_{\eta}\psi(z_{j}-z_{k})a_{\sigma,\eta
}(z_{j}-z_{k});
\]
we claim that the matrix $M_{(N)}=(M_{jk})_{1\leq j,k\leq N}$ is positive
semidefinite. In fact, $M$ is the Hadamard product of the positive
semidefinite matrices $M_{(N)}^{\prime}=(M_{jk}^{\prime})_{1\leq j,k\leq N}$
and $M_{(N)}^{\prime\prime}=(M_{jk}^{\prime\prime})_{1\leq j,k\leq N}$ where%
\begin{align*}
M_{jk}^{\prime}  &  =e^{\frac{i}{2\eta}\sigma(z_{j},z_{k})}\operatorname*{Amb}%
\nolimits_{\eta}\psi(z_{j}-z_{k})\\
M_{jk}^{\prime\prime}  &  =e^{-\frac{i}{2\eta}\sigma(z_{j},z_{k})}%
a_{\sigma,\eta}(z_{j}-z_{k})\text{\ }%
\end{align*}
and Lemma \ref{lemmaschur} implies that $M_{(N)}$ is also positive
semidefinite. It follows from Bochner's theorem that the function $b$ defined
by
\[
b_{\sigma,\eta}(z)=\operatorname*{Amb}\nolimits_{\eta}\psi(z)a_{\sigma,\eta
}(-z)
\]
is a probability density hence $b(z)\geq0$ for all $z\in\mathbb{R}^{2n}$.
Integrating this equality we get, using the Plancherel formula for the
symplectic $\eta$-Fourier transform,
\begin{align*}
(2\pi\eta)^{n}b(0)  &  =\int_{\mathbb{R}^{2n}}\operatorname*{Amb}%
\nolimits_{\eta}\psi(z)a_{\sigma,\eta}(-z)dz\\
&  =\int_{\mathbb{R}^{2n}}W_{\eta}\psi(z)a(z)dz
\end{align*}
hence the inequality (\ref{integraltoprove}) since $b(0)\geq0$.
\end{proof}

\subsection{The\ Gaussian case}

Let $\Sigma$ be a positive symmetric (real) $2n\times2n$ matrix and consider
the Gaussian%
\begin{equation}
\rho(z)=(2\pi)^{-n}\sqrt{\det\Sigma^{-1}}e^{-\frac{1}{2}\Sigma^{-1}z^{2}}.
\label{Gauss}%
\end{equation}
Let us find for which values of $\eta$ the function $\rho$ is the $\eta
$-Wigner distribution of a density operator. Narcowich \cite{Narcow2} was the
first to address this question using techniques from harmonic analysis using
the approach in Kastler's paper \cite{Kastler}; we give here a new and simpler
proof using the multidimensional Hardy's uncertainty principle, which we state
in the following form:

\begin{lemma}
\label{LemmaHardy}Let $A$ and $B$ be two real positive definite matrices and
$\psi\in L^{2}(\mathbb{R}^{n})$, $\psi\neq0$. Assume that
\begin{equation}
|\psi(x)|\leq Ce^{-\tfrac{1}{2}Ax^{2}}\text{ \ and \ }|F_{\eta}\psi(p)|\leq
Ce^{-\tfrac{1}{2}Bp^{2}} \label{AB}%
\end{equation}
for a constant $C>0$. Then:

(i) The eigenvalues $\lambda_{j}$, $j=1,...,n$, of the matrix $AB$ are all
$\leq1/\eta^{2}$;

(ii) If $\lambda_{j}=1/\eta^{2}$ for all $j$, then $\psi(x)=ke^{-\frac{1}%
{2}Ax^{2}}$ for some constant $k$.
\end{lemma}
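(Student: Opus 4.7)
My plan is to reduce the multidimensional statement to the classical one-dimensional Hardy uncertainty principle via two changes of variable. First, setting $\psi_{1}(y)=\psi(A^{-1/2}y)$ one checks that $|\psi_{1}(y)|\le C e^{-|y|^{2}/2}$ (since $Ax\cdot x=|y|^{2}$ when $y=A^{1/2}x$), and tracking how $F_{\eta}$ transforms under this substitution shows $F_{\eta}\psi_{1}(\xi)=(\det A^{1/2})(F_{\eta}\psi)(A^{1/2}\xi)$, so $|F_{\eta}\psi_{1}(\xi)|\le C' e^{-\frac{1}{2}(A^{1/2}BA^{1/2})\xi\cdot\xi}$. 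The matrix $\tilde{B}=A^{1/2}BA^{1/2}$ is similar to $AB$ (via $\tilde{B}=A^{-1/2}(AB)A^{1/2}$), hence has the same eigenvalues $\lambda_{1},\ldots,\lambda_{n}$. Diagonalising $\tilde{B}=RDR^{T}$ with some $R\in\mathrm{O}(n)$ and $D=\mathrm{diag}(\lambda_{1},\ldots,\lambda_{n})$, setting $\psi_{2}(z)=\psi_{1}(Rz)$ (and noting that $F_{\eta}$ intertwines rotations) reduces the problem to
\[
|\psi_{2}(z)|\le C e^{-|z|^{2}/2},\qquad |F_{\eta}\psi_{2}(\zeta)|\le C' e^{-\frac{1}{2}\sum_{j}\lambda_{j}\zeta_{j}^{2}}.
\]

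Next I would apply one-dimensional Hardy slice-by-slice. Fix $j$ and a transverse point $z'=(z_{1},\ldots,\widehat{z_{j}},\ldots,z_{n})$, and let $f(t)=\psi_{2}(z',t)$; then $|f(t)|\le C e^{-|z'|^{2}/2}e^{-t^{2}/2}$. Fourier inversion in the $n-1$ transverse frequency variables gives
\[
F_{\eta}f(\zeta_{j})=(2\pi|\eta|)^{-(n-1)/2}\int_{\mathbb{R}^{n-1}}e^{iz'\cdot\zeta'/\eta}F_{\eta}\psi_{2}(\zeta',\zeta_{j})\,d\zeta',
\]
and inserting the Gaussian bound on $F_{\eta}\psi_{2}$ under the modulus produces $|F_{\eta}f(\zeta_{j})|\le M e^{-\lambda_{j}\zeta_{j}^{2}/2}$ with $M$ finite and independent of $z'$, the transverse Gaussian integral converging because each $\lambda_{k}>0$. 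The one-dimensional Hardy principle for $F_{\eta}$ (whose critical product of exponents is $1/\eta^{2}$, as one sees by computing $F_{\eta}[e^{-\alpha t^{2}/2}]$) then forces $\lambda_{j}\le 1/\eta^{2}$: if instead $\lambda_{j}>1/\eta^{2}$, $f$ would vanish for every slice $z'$, contradicting $\psi\ne 0$. This gives (i).

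For (ii), suppose every $\lambda_{j}=1/\eta^{2}$. The equality case of one-dimensional Hardy yields $f(t)=\mu_{j}(z')\,e^{-t^{2}/2}$, i.e.\ the quotient $\psi_{2}(z)/e^{-z_{j}^{2}/2}$ is independent of $z_{j}$. Running this separately for each $j$, the function $h(z)=\psi_{2}(z)\,e^{|z|^{2}/2}$ is independent of every coordinate and hence equals a constant $k$. Unwinding the two substitutions via $|R^{T}y|=|y|$ and $|A^{1/2}x|^{2}=Ax\cdot x$ gives $\psi(x)=k e^{-\frac{1}{2}Ax\cdot x}$, as claimed.

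The main obstacle is the step bounding the partial (one-variable) $\eta$-Fourier transform from the Gaussian decay of the full $F_{\eta}\psi_{2}$, uniformly in the slice parameter $z'$, so that one genuinely has a one-dimensional Hardy input on every slice; once that uniform Gaussian bound is secured, both (i) and (ii) follow mechanically from the classical one-dimensional theorem and its equality case. A minor bookkeeping point is verifying that the conjugation $A^{1/2}BA^{1/2}\sim AB$ preserves eigenvalues, so that the $\lambda_{j}$ controlled after the reduction are exactly the eigenvalues of $AB$ appearing in the statement.
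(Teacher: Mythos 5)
Your argument is correct, but note that the paper does not actually prove Lemma \ref{LemmaHardy}: its ``proof'' consists of a citation to de Gosson--Luef \cite{goluhardy} and to \cite{Birkbis}, so what you have written is a genuine self-contained derivation rather than a rederivation of something in the text. Your reduction (normalize $A$ to the identity via $\psi_1=\psi\circ A^{-1/2}$, observe that $A^{1/2}BA^{1/2}$ is symmetric positive definite and similar to $AB$ --- which incidentally is what makes the eigenvalues $\lambda_j$ real and positive in the first place --- rotate to diagonalize, then apply the one-dimensional Hardy theorem coordinate by coordinate) is the standard route and is essentially what the cited sources do. The two points needing care are exactly the ones you flag, and both are routine here: the partial Fourier inversion expressing $F_\eta f(\zeta_j)$ as an integral of $F_\eta\psi_2(\cdot,\zeta_j)$ over the transverse frequencies is justified because the Gaussian bounds place both $\psi_2$ and $F_\eta\psi_2$ in $L^1$, so one may work with the continuous representative of $\psi_2$ and every slice $f=\psi_2(z',\cdot)$ is well defined; and the resulting bound $|F_\eta f(\zeta_j)|\le M e^{-\lambda_j\zeta_j^2/2}$ is uniform in $z'$ since the transverse Gaussian integral converges. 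A virtue of your slice-by-slice formulation is that it avoids the pitfall of the more common variant that integrates $\psi_2$ over the transverse \emph{spatial} variables, which can produce an identically vanishing one-dimensional function even when $\psi\neq0$; in your version, vanishing of every slice forces $\psi=0$. In the equality case the constant $\mu_j(z')$ may be zero for some $z'$, but this does not affect the conclusion that $\psi_2(z)e^{|z|^2/2}$ is independent of each coordinate and hence constant.
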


\begin{proof}
See de Gosson and Luef \cite{goluhardy}, de Gosson \cite{Birkbis}. The $\eta
$-Fourier transform $F_{\eta}\psi$ in the second inequality (\ref{AB}) is
given by \eqref{Feta}.
\end{proof}

We will also need the two following lemmas; the first is a positivity result.

\begin{lemma}
\label{Lemmaposex} If $R$ is a symmetric positive semidefinite $2n\times2n$
matrix, then%
\begin{equation}
P_{(N)}=\left(  Rz_{j}\cdot z_{k}\right)  _{1\leq j,k\leq N} \label{P}%
\end{equation}
is a symmetric positive semidefinite $N\times N$ matrix for all $z_{1}%
,...,z_{N}\in\mathbb{R}^{2n}$.
\end{lemma}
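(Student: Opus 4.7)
The plan is to recognize $P_{(N)}$ as a Gram matrix, which immediately gives positive semidefiniteness. First, since $R$ is symmetric and positive semidefinite, I would write $R = S^{T}S$ for some real matrix $S$ (for instance $S = R^{1/2}$, the symmetric square root of $R$). Setting $v_{j} = Sz_{j} \in \mathbb{R}^{m}$ (where $m$ is the number of rows of $S$), we have
\[
P_{jk} = R z_{j} \cdot z_{k} = S^{T} S z_{j} \cdot z_{k} = S z_{j} \cdot S z_{k} = v_{j} \cdot v_{k},
\]
so $P_{(N)}$ is the Gram matrix of the vectors $v_{1}, \ldots, v_{N}$.

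Symmetry of $P_{(N)}$ is then automatic, since $v_{j} \cdot v_{k} = v_{k} \cdot v_{j}$; equivalently it follows directly from the symmetry of $R$. For positive semidefiniteness, I would take an arbitrary $c = (c_{1}, \ldots, c_{N}) \in \mathbb{C}^{N}$ and compute
\[
\sum_{1 \le j,k \le N} c_{j} \overline{c_{k}} P_{jk} = \sum_{j,k} c_{j} \overline{c_{k}} \, (v_{j} \cdot v_{k}) = \sum_{\ell=1}^{m} \Bigl| \sum_{j=1}^{N} c_{j} (v_{j})_{\ell} \Bigr|^{2} \ge 0,
\]
where I separated the real inner product $v_{j} \cdot v_{k} = \sum_{\ell} (v_{j})_{\ell}(v_{k})_{\ell}$ and recognized each inner sum over $j,k$ as $|\sum_{j} c_{j}(v_{j})_{\ell}|^{2}$.

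There is no real obstacle here — the only input is the existence of a square root (or any factorization $R = S^{T}S$), which is guaranteed by the spectral theorem applied to the symmetric positive semidefinite $R$. The proof occupies at most a few lines.
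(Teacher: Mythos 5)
Your proof is correct and follows essentially the same route as the paper: both factor $R$ as a product $S^{T}S$ (the paper writes $R=L^{\ast}L$ via a Cholesky-type decomposition, you via the symmetric square root) and then recognize $P_{(N)}$ as a Gram matrix whose associated Hermitian quadratic form is a sum of squared moduli. Your version is, if anything, slightly cleaner in the semidefinite case, since the square root always exists by the spectral theorem without any positivity-rank caveat.
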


\begin{proof}
There exists a matrix $L$ such that $R=L^{\ast}L$ (Cholesky decomposition).
Denoting by $\langle z|z^{\prime}\rangle=z\cdot\overline{z^{\prime}}$ the
inner product on $\mathbb{C}^{2n}$ we have, since the $z_{j}$ are real
vectors,
\[
L^{\ast}z_{j}\cdot z_{k}=\langle L^{\ast}z_{j}|z_{k}\rangle=\langle
z_{j}|Lz_{k}\rangle=z_{j}\cdot\overline{Lz_{k}}%
\]
hence $Rz_{j}\cdot z_{k}=Lz_{j}\cdot\overline{Lz_{k}}$. It follows that for
all complex $\zeta_{j}$ we have
\[
\sum_{1\leq j,k\leq N}\zeta_{j}\overline{\zeta_{k}}Rz_{j}\cdot z_{k}%
=\sum_{1\leq j\leq N}\zeta_{j}Lz_{j}\overline{\left(  \sum_{1\leq j\leq
N}\zeta_{j}Lz_{j}\right)  }\geq0
\]
hence our claim.
\end{proof}

The second lemma is a well-known diagonalization result (Williamson's
symplectic diagonalization theorem \cite{Folland,Birk}):

\begin{lemma}
\label{Williamson}Let $\Sigma$ be a symmetric positive definite real
$2n\times2n$ matrix. There exists $S\in\operatorname*{Sp}(n)$ such that
$\Sigma=S^{T}DS$ where
\[
D=%
\begin{pmatrix}
\Lambda & 0\\
0 & \Lambda
\end{pmatrix}
\]
with $\Lambda=\operatorname*{diag}(\lambda_{1},...,\lambda_{n})$, the positive
numbers $\lambda_{j}$ being the symplectic eigenvalues of $\Sigma$ (that is,
$\pm i\lambda_{1},...,\pm i\lambda_{n}$ are the eigenvalues of $J\Sigma
\backsim\Sigma^{1/2}J\Sigma^{1/2}$).
\end{lemma}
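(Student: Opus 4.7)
The plan is to prove Williamson's theorem by reducing the problem to the standard orthogonal normal form for a real antisymmetric matrix, applied to a cleverly chosen conjugate of $J$.

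The starting point is to consider the matrix $M=\Sigma^{1/2}J\Sigma^{1/2}$, where $\Sigma^{1/2}$ is the unique positive definite symmetric square root of $\Sigma$. Since $\Sigma^{1/2}$ is symmetric and $J^{T}=-J$, one has $M^{T}=-M$, so $M$ is real antisymmetric; it is invertible because $\Sigma^{1/2}$ is. Moreover, $M$ is similar to $J\Sigma$ via $J\Sigma=\Sigma^{-1/2}M\Sigma^{1/2}$, so the two matrices have the same spectrum. This both identifies the numbers $\lambda_{j}$ appearing in the statement (the eigenvalues of $M$ are purely imaginary, come in conjugate pairs $\pm i\lambda_{j}$ with $\lambda_{j}>0$, and by similarity are exactly the eigenvalues of $J\Sigma$) and motivates the next step: diagonalize $M$ orthogonally.

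Next I would invoke the standard normal form: for any invertible real antisymmetric $2n\times 2n$ matrix there exists an orthogonal matrix $O$ (obtained by grouping the eigenvectors of $-M^{2}=M^{T}M$, which is symmetric positive definite with eigenvalues $\lambda_{j}^{2}$, into an appropriate real orthonormal basis) such that
\[
O^{T}MO=\begin{pmatrix}0 & \Lambda\\ -\Lambda & 0\end{pmatrix}=D^{1/2}JD^{1/2},
\]
where $\Lambda=\operatorname{diag}(\lambda_{1},\dots,\lambda_{n})$ and $D=\operatorname{diag}(\Lambda,\Lambda)$. Rewriting this as $(D^{-1/2}O^{T}\Sigma^{1/2})\,J\,(\Sigma^{1/2}OD^{-1/2})=J$ shows that the matrix
\[
U=\Sigma^{-1/2}OD^{1/2}
\]
satisfies $U^{T}JU=J$; indeed, inverting the previous identity and multiplying by suitable factors of $D^{1/2}$ yields $U^{T}JU=D^{1/2}(D^{-1/2}JD^{-1/2})D^{1/2}=J$, so $U\in\operatorname{Sp}(n)$.

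It remains to check that $U$ diagonalizes $\Sigma$ in the required sense: a direct computation gives
\[
U^{T}\Sigma U=D^{1/2}O^{T}\Sigma^{-1/2}\,\Sigma\,\Sigma^{-1/2}OD^{1/2}=D^{1/2}O^{T}OD^{1/2}=D.
\]
Setting $S=U^{-1}\in\operatorname{Sp}(n)$ gives $\Sigma=S^{T}DS$, as claimed. The main obstacle is the second step, the normal form for real antisymmetric matrices with the specific block pattern $D^{1/2}JD^{1/2}$: one must verify that the two-dimensional invariant subspaces of $M$ (associated with each pair $\pm i\lambda_{j}$) can be chosen mutually orthogonal and can be rotated so that, after a permutation of the basis vectors, $M$ takes exactly the block form above; everything else is linear-algebra bookkeeping, and the similarity $J\Sigma\sim\Sigma^{1/2}J\Sigma^{1/2}$ confirms the identification of the $\lambda_{j}$ as the symplectic eigenvalues.
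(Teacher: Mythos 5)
Your proof is correct. The paper itself offers no argument for this lemma --- its ``proof'' is just a pointer to \cite{Folland,Birk,Birkbis} --- and what you have written is precisely the classical argument given in those references: antisymmetrize via $M=\Sigma^{1/2}J\Sigma^{1/2}$, bring $M$ to the orthogonal normal form $D^{1/2}JD^{1/2}$, and read off that $U=\Sigma^{-1/2}OD^{1/2}$ is symplectic with $U^{T}\Sigma U=D$; all your identities ($M^{T}=-M$, the similarity $J\Sigma=\Sigma^{-1/2}M\Sigma^{1/2}$, $U^{T}JU=J$, $U^{T}\Sigma U=D$, and $S=U^{-1}$) check out, including the sign conventions for the paper's $J$. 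The only ingredient you defer to is the orthogonal normal form for an invertible real antisymmetric matrix, which you correctly flag and sketch (pair each unit eigenvector $e$ of $-M^{2}$ with $f=Me/\lambda_{j}$, note $e\perp f$ by antisymmetry, and permute the basis to pass from $2\times2$ blocks to the $\bigl(\begin{smallmatrix}0&\Lambda\\-\Lambda&0\end{smallmatrix}\bigr)$ pattern); that is a standard fact and its use here is legitimate.
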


\begin{proof}
See for instance\ \cite{Folland,Birk,Birkbis}.
\end{proof}

We now have the tools needed to give a complete characterization of Gaussian
$\eta$-Wigner distributions:

\begin{proposition}
Let $\eta\in R\setminus\{0\}$. The Gaussian function (\ref{Gauss}) is the
$\eta$-Wigner transform of a positive trace class operator if and only if
\begin{equation}
|\eta|\leq2\lambda_{\min} \label{etalambda}%
\end{equation}
where $\lambda_{\min}$ is the smallest symplectic eigenvalue of $\Sigma$;
equivalently the self-adjoint matrix $\Sigma+i\eta J$ is positive
semidefinite:
\begin{equation}
\Sigma+\frac{i\eta}{2}J\geq0. \label{sigmaj}%
\end{equation}

\end{proposition}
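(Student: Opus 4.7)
The plan is to first establish the algebraic equivalence of the two stated conditions, then reduce the proof of the main characterization to the block-diagonal case $\Sigma=D$ by metaplectic covariance, and finally treat sufficiency via an explicit harmonic-oscillator construction and necessity via Hardy's uncertainty principle applied to the marginals.

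For the algebraic equivalence, apply Williamson (Lemma \ref{Williamson}) to write $\Sigma=S^{T}DS$ with $S\in\operatorname{Sp}(n)$. The symplectic identity $S^{T}JS=J$ yields $\Sigma+\tfrac{i\eta}{2}J=S^{T}(D+\tfrac{i\eta}{2}J)S$, so these Hermitian matrices are congruent. A permutation of coordinates then decouples $D+\tfrac{i\eta}{2}J$ into $n$ independent Hermitian blocks $\bigl(\begin{smallmatrix}\lambda_j & i\eta/2\\ -i\eta/2 & \lambda_j\end{smallmatrix}\bigr)$ with eigenvalues $\lambda_j\pm|\eta|/2$; positivity is therefore equivalent to $\lambda_j\geq|\eta|/2$ for every $j$.

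By the metaplectic covariance of the $\eta$-Weyl calculus, $\operatorname{Op}_\eta^{\mathrm{W}}(\rho\circ S^{-1})=\widehat{S}\operatorname{Op}_\eta^{\mathrm{W}}(\rho)\widehat{S}^{-1}$ for a metaplectic lift $\widehat{S}\in\operatorname{Mp}(n)$ of $S$, and unitary conjugation preserves positivity and the trace-class property. We may therefore assume $\Sigma=D$, in which case $\rho$ factors as $\rho=\prod_{j=1}^{n}\rho_j$ with $\rho_j(x,p)=(2\pi\lambda_j)^{-1}e^{-(x^2+p^2)/(2\lambda_j)}$, and correspondingly $\widehat{A}=\bigotimes_{j=1}^{n}\widehat{A}_j$ where $\widehat{A}_j=(2\pi\eta)\operatorname{Op}_\eta^{\mathrm{W}}(\rho_j)$. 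For sufficiency when $|\eta|\leq 2\lambda_j$, I would construct each $\widehat{A}_j$ as a positive scalar multiple of the semigroup $e^{-\beta_j\widehat{H}}$, $\widehat{H}=\tfrac12(\widehat{x}^2+\widehat{p}^2)$: Mehler's formula gives its $\eta$-Weyl symbol as an explicit Gaussian, and matching the exponent to $(2\pi\eta)\rho_j$ requires $\tanh(\beta_j\eta/2)=\eta/(2\lambda_j)$, which is solvable in $\beta_j\in[0,+\infty]$ precisely when $|\eta|\leq 2\lambda_j$ (the boundary case corresponding, as $\beta_j\to+\infty$, to the ground-state projector).

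For necessity, write $\widehat{A}=\sum_j\alpha_j\widehat{\Pi}_{\psi_j}$ with $\alpha_j\geq 0$ and use the marginal identities \eqref{marginal} to obtain
\[
\sum_j\alpha_j|\psi_j(x)|^2=\int\rho(x,p)\,dp\quad\text{and}\quad \sum_j\alpha_j|F_\eta\psi_j(p)|^2=\int\rho(x,p)\,dx,
\]
both being explicit Gaussians with covariance $\Lambda=\operatorname{diag}(\lambda_1,\ldots,\lambda_n)$ in the diagonal case. Then for every $j$ with $\alpha_j>0$ we get $|\psi_j(x)|\leq C_je^{-\Lambda^{-1}x\cdot x/4}$ and $|F_\eta\psi_j(p)|\leq C_je^{-\Lambda^{-1}p\cdot p/4}$, so Hardy's Lemma \ref{LemmaHardy} applies with $A=B=\Lambda^{-1}/2$ and forces the eigenvalues of $AB=\Lambda^{-2}/4$ to satisfy $1/(4\lambda_j^2)\leq 1/\eta^2$, whence $\lambda_j\geq|\eta|/2$ for every $j$ and therefore $|\eta|\leq 2\lambda_{\min}$. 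The main technical obstacle will be the sufficiency step: carefully matching the prefactors in Mehler's formula to the normalization of $\rho_j$ and handling the boundary case $|\eta|=2\lambda_j$ as a pure-state limit.
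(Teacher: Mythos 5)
Your proof is correct, and most of it runs parallel to the paper's: the algebraic equivalence of \eqref{etalambda} and \eqref{sigmaj} via Williamson's Lemma \ref{Williamson} and the $2\times2$ block eigenvalues $\lambda_j\pm|\eta|/2$, the metaplectic reduction to $\Sigma=D$, and the necessity argument via the marginal identities \eqref{marginal} and Hardy's uncertainty principle (Lemma \ref{LemmaHardy}) are all exactly the paper's steps --- and your choice $A=B=\tfrac{1}{2}\Lambda^{-1}$ is the one that actually makes the eigenvalue condition come out as $|\eta|\le2\lambda_j$; the paper's $A=B=\tfrac{\eta}{2}\Lambda^{-1}$ at that point is evidently a slip. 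Where you genuinely diverge is sufficiency. The paper verifies the KLM conditions of Theorem \ref{Prop2} directly: it computes $\rho_{\Diamond}(z)=e^{-\frac{1}{4}Dz^{2}}$, factors the KLM matrix as $\Delta_{(N)}\Gamma_{(N)}\Delta_{(N)}^{\ast}$ with $\Gamma_{jk}=e^{\frac{1}{2}(D+i\eta J)z_{j}\cdot z_{k}}$, and concludes positive semidefiniteness from Lemma \ref{Lemmaposex} together with the Schur product theorem applied to the exponential series. You instead exhibit the operator explicitly as a tensor product of harmonic-oscillator Gibbs states $e^{-\beta_{j}\widehat{H}}$, matching exponents through Mehler's formula via $\tanh(\beta_{j}\eta/2)=\eta/(2\lambda_{j})$, which is solvable precisely when $|\eta|\le2\lambda_{j}$, the boundary case degenerating to the ground-state projector. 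The trade-off: the paper's route stays entirely inside its own toolbox (it needs only the symplectic Fourier transform of a Gaussian and the already-proved KLM theorem), whereas yours imports Mehler's formula but is constructive --- it hands you the spectral decomposition for free (Hermite eigenbasis, geometrically decaying eigenvalues, hence the trace-class property without further argument) and identifies the admissible Gaussians as thermal states, which is more informative physically. When writing it up, do the prefactor bookkeeping in Mehler's formula carefully and treat $\eta<0$ explicitly (both $\tanh(\beta_{j}\eta/2)$ and $\eta/(2\lambda_{j})$ change sign together, so the argument survives, but it should be said).
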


\begin{proof}
Let us first show that the conditions (\ref{etalambda}) and (\ref{sigmaj}) are
equivalent. Let $\Sigma=S^{T}DS$ be a symplectic diagonalization of $\Sigma$
(Lemma \ref{Williamson}). Since $S^{T}JS=J$ condition (\ref{sigmaj}) is
equivalent to $D+\frac{i\eta}{2}J\geq0$. Let $z=(x,p)$ be an eigenvector of
$D+\frac{i\eta}{2}J$; the corresponding eigenvalue $\lambda$ is real and
$\geq0$. The characteristic polynomial of $D+\frac{i\eta}{2}J$ is
\[
P(\lambda)=\det\left[  (\Lambda-\lambda I)^{2}-\tfrac{\eta^{2}}{4}I\right]
=P_{1}(\lambda)\cdot\cdot\cdot P_{n}(\lambda)
\]
where
\[
P_{j}(\lambda)=(\lambda_{j}-\lambda)^{2}-\tfrac{\eta^{2}}{4}%
\]
hence the eigenvalues $\lambda$ of $D+\frac{i\eta}{2}J$ are the numbers
$\lambda=\lambda_{j}\pm\frac{1}{2}\eta$; since $\lambda\geq0$ the condition
$D+\frac{i\eta}{2}J\geq0$ is equivalent to $\lambda_{j}\geq\sup\{\pm\frac
{1}{2}\eta\}=\frac{1}{2}|\eta|$ for all $j$, which is the condition
(\ref{etalambda}). Let us now show that the condition (\ref{etalambda}) is
necessary for the function
\begin{equation}
\rho(z)=(2\pi)^{-n}\sqrt{\det\Sigma^{-1}}e^{-\frac{1}{2}\Sigma^{-1}z^{2}}%
\end{equation}
to be $\eta$-Wigner transform of a positive trace class operator. Let
$\widehat{\rho}=(2\pi\eta)^{n}\operatorname*{Op}_{\eta}^{\mathrm{W}}(\rho)$
and set $a(z)=(2\pi\eta)^{n}\rho(z)$. Let $\widehat{S}$ $\in\operatorname*{Mp}%
(n)$; the operator $\widehat{\rho}$ is of trace class if only if
$\widehat{S}\widehat{\rho}\widehat{S}^{-1}$ is, in which case
$\operatorname{Tr}(\widehat{\rho})=\operatorname{Tr}(\widehat{S}\widehat{\rho
}\widehat{S}^{-1})$. Choose $\widehat{S}$ with projection $S\in
\operatorname*{Sp}(n)$ such that $\Sigma=S^{T}DS$ is a symplectic
diagonalization of $\Sigma$. This choice reduces the proof to the case
$\Sigma=D$, that is to%
\begin{equation}
\rho(z)=(2\pi)^{-n}(\det\Lambda^{-1})e^{-\frac{1}{2}(\Lambda^{-1}x^{2}%
+\Lambda^{-1}p^{2})}. \label{Gaussdiag}%
\end{equation}
Suppose now that $\widehat{\rho}$ is of trace class; then there exist an
orthonormal basis of functions $\psi_{j}\in L^{2}(\mathbb{R}^{n})$ ($1\leq
j\leq n$) such that
\[
\rho(z)=\sum_{j}\alpha_{j}W_{\eta}\psi_{j}(z)
\]
where the $\alpha_{j}\geq0$ sum up to one. Integrating with respect to the $p$
and $x$ variables, respectively, the marginal conditions satisfied by the
$\eta$-Wigner transform and formula (\ref{Gaussdiag}) imply that we have%
\begin{align*}
\sum_{j}\alpha_{j}|\psi_{j}(x)|^{2}  &  =(2\pi)^{-n/2}(\det\Lambda
)^{1/2}e^{-\frac{1}{2}\Lambda^{-1}x^{2}}\\
\sum_{j}\alpha_{j}|F_{\eta}\psi_{j}(p)|^{2}  &  =(2\pi)^{-n/2}(\det
\Lambda)^{1/2}e^{-\frac{1}{2}\Lambda^{-1}p^{2}}.
\end{align*}
In particular, since $\alpha_{j}\geq0$ for every $j=1,2,...,n,\dots$,%
\[
|\psi_{j}(x)|\leq C_{j}e^{-\frac{1}{4}\Lambda^{-1}x^{2}}\text{ \ , \ }%
|F_{\eta}\psi_{j}(p)|\leq C_{j}e^{-\frac{1}{4}\Lambda^{-1}p^{2}}%
\]
here, if $\alpha_{j}\not =0$, $C_{j}=(2\pi)^{-n/4}(\det\Lambda)^{1/4}%
/\alpha_{j}^{1/2}$. Applying Lemma \ref{LemmaHardy} with $A=B=\frac{\eta}%
{2}\Lambda^{-1}$ we must have $|\eta|\leq2\lambda_{j}$ for all $j=1,\dots n$,
which is condition (\ref{etalambda}); this establishes the sufficiency
statement. (iii) Let us finally show that, conversely, the condition
(\ref{sigmaj}) is sufficient. It is again no restriction to assume that
$\Sigma$ is the diagonal matrix $D=%
\begin{pmatrix}
\Lambda & 0\\
0 & \Lambda
\end{pmatrix}
$; the symplectic Fourier transform of $\rho$ is easily calculated and one
finds that $\rho_{\Diamond}(z)=e^{-\frac{1}{4}Dz^{2}}$. Let $\Lambda
_{(N)}=(\Lambda_{jk})_{1\leq j,k\leq N}$ with%
\[
\Lambda_{jk}=e^{-\frac{i\eta}{2}\sigma(z_{j},z_{k})}\rho_{\Diamond}%
(z_{j}-z_{k});
\]
a simple algebraic calculation shows that we have
\[
\Lambda_{jk}=e^{-\frac{1}{4}Dz_{j}^{2}}e^{\frac{1}{2}(D+i\eta J)z_{j}\cdot
z_{k}}e^{-\frac{1}{4}Dz_{k}^{2}}%
\]
and hence%
\[
\Lambda_{(N)}=\Delta_{(N)}\Gamma_{(N)}\Delta_{(N)}^{\ast}%
\]
where $\Delta_{(N)}=\operatorname*{diag}(e^{-\frac{1}{4}Dz_{1}^{2}%
},...,e^{-\frac{1}{4}Dz_{N}^{2}})$ and $\Gamma_{(N)}=(\Gamma_{jk})_{1\leq
j,k\leq N}$ with $\Gamma_{jk}=e^{\frac{1}{2}(D+i\eta J)z_{j}\cdot z_{k}}$. The
matrix $\Lambda_{(N)}$ is thus positive semidefinite if and only if
$\Gamma_{(N)}$ is, but this is the case in view of Lemma \ref{Lemmaposex}.
\end{proof}

\begin{remark}
Setting $2\lambda_{\min}=\hslash$ and writing $\Sigma$ in the block-matrix
form $%
\begin{pmatrix}
\Sigma_{xx} & \Sigma_{xp}\\
\Sigma_{px} & \Sigma_{pp}%
\end{pmatrix}
$ where $\Sigma_{xx}=(\sigma_{x_{j},x_{k}})_{1\leq j,k\leq n}$, $\Sigma
_{xp}=(\sigma_{x_{j},p_{k}})_{1\leq j,k\leq n}$ and so on, one shows
\cite{goluPR} that (\ref{sigmaj}) is equivalent to the generalized uncertainty
relations (\textquotedblleft Robertson--Schr\"{o}dinger
inequalities\textquotedblright, see \cite{goluPR})%
\begin{equation}
\sigma_{x_{j}}^{2}\sigma_{p_{j}}^{2}\geq\sigma_{x_{j},p_{j}}^{2}+\tfrac{1}%
{4}\hbar^{2} \label{RS}%
\end{equation}
where, for $\leq j\leq n$, the $\sigma_{x_{j}}^{2}=\sigma_{x_{j},x_{j}}$,
$\sigma_{p_{j}}^{2}=\sigma_{p_{j},p_{j}}$ are viewed as variances and the
$\sigma_{x_{j},p_{j}}^{2}$ as covariances.
\end{remark}

\section{The KLM Conditions in phase space\label{Secfabio}}

\subsection{The main result}

We now consider a Gabor (or Weyl--Heisenberg) frame $\mathcal{G}(\phi
,\Lambda)$ for $L^{2}(\mathbb{R}^{n})$, with window $\phi\in L^{2}%
(\mathbb{R}^{n})$ and lattice $\Lambda\in\mathbb{R}^{n}$ (cf.
\eqref{framedef}). Time-frequency analysis and the Wigner formalism are the
main ingredients for proving our main result.

\begin{proof}
[Proof of Theorem \ref{ThmFabio}](i) Since $\mathcal{G}(\phi,\Lambda)$ is a
Gabor frame, we can write
\begin{equation}
\label{gaborespansione}\psi=\sum_{\lambda\in\Lambda}c_{\lambda}T(\lambda)\phi
\end{equation}
for some $(c_{\lambda})\in\ell^{2}(\Lambda)$. Let us prove that
\begin{equation}
\label{aggiunta}\int_{\mathbb{R}^{2n}}a(z)W_{\eta}\psi(z)dz=\sum
\nolimits_{\lambda,\mu\in\Lambda}c_{\lambda}\overline{c_{\mu}}e^{-\frac
{i}{2\eta}\sigma(\lambda,\mu)}a_{\lambda,\mu}.
\end{equation}

In view of the sesquilinearity of the cross-Wigner transform and its
continuity as a map $L^{2}(\mathbb{R}^{n})\times L^{2}(\mathbb{R}^{n})\to
L^{2}(\mathbb{R}^{2n})$ we have%
\[
W_{\eta}\left(
%TCIMACRO{\tsum \nolimits_{\lambda\in\Lambda}}%
%BeginExpansion
{\textstyle\sum\nolimits_{\lambda\in\Lambda}}
%EndExpansion
c_{\lambda}T(\lambda)\phi\right)  =%
%TCIMACRO{\tsum \nolimits_{\lambda\in\Lambda}}%
%BeginExpansion
{\textstyle\sum\nolimits_{\lambda\in\Lambda}}
%EndExpansion
c_{\lambda}\overline{c_{\mu}}W_{\eta}(T(\lambda)\phi,T(\mu)\phi).
\]
Using the relation (formula (9.23) in \cite{Birkbis})%
\[
W_{\eta}(T(\lambda)\phi,T(\mu)\phi)=e^{-\frac{i}{2\eta}\sigma(\lambda,\mu
)}e^{-\frac{i}{\eta}\sigma(z,\lambda-\mu)}W_{\eta}\phi(z-\tfrac{1}{2}%
(\lambda+\mu))
\]
we obtain
\begin{align*}
\int_{\mathbb{R}^{2n}}  &  a(z)W_{\eta}\psi(z)dz\\
&  =c_{\lambda}\overline{c_{\mu}}e^{-\frac{i}{2\eta}\sigma(\lambda,\mu)}%
\int_{\mathbb{R}^{2n}}a(z)e^{-\frac{i}{\eta}\sigma(z,\lambda-\mu)}W_{\eta}%
\phi(z-\tfrac{1}{2}(\lambda+\mu))dz\\
&  =\sum\nolimits_{\lambda,\mu\in\Lambda}c_{\lambda}\overline{c_{\mu}%
}e^{-\frac{i}{2\eta}\sigma(\lambda,\mu)}a_{\lambda,\mu}.
\end{align*}
Suppose now $\widehat{A}_{\eta}\geq0$, that is
\begin{equation}
\int_{\mathbb{R}^{2n}}a(z)W_{\eta}\psi(z)dz\geq0 \label{apos1}%
\end{equation}
for every $\psi\in L^{2}(\mathbb{R}^{n})$. For any given sequence
$(c_{\lambda})_{\lambda\in\Lambda}$ with $c_{\lambda}=0$ for $|\lambda|>N$ we
take $\psi$ as in \eqref{gaborespansione} and apply \eqref{aggiunta}; we
obtain that the finite matrix in \eqref{lm1} is positive semidefinite.

Assume conversely that the matrix in \eqref{lm1} is positive semidefinite for
every $N$; then the right-hand side of \eqref{aggiunta} is nonnegative,
whenever the series converges (unconditionally). Now, every $\psi\in
L^{2}(\mathbb{R}^{n})$ has a Gabor expansion as in \eqref{gaborespansione},
with $c_{\lambda}=(\psi|T(\lambda)\gamma)_{L^{2}}$ in $\ell^{2}(\Lambda)$, for
some dual window $\gamma\in L^{2}(\mathbb{R}^{n})$. Hence from
\eqref{aggiunta} and \eqref{apos1} we deduce $\widehat{A}\geq0$.

(ii) The desired result follows from the following calculation: we have%
\[
M^{\prime}_{\lambda,\mu}=W_{\eta}(a,(W_{\eta}\phi)^{\vee})(\tfrac{1}%
{4}(\lambda+\mu),\tfrac{1}{2}J(\mu-\lambda))
\]
that is, by definition of the Wigner transform,%
\begin{multline*}
M^{\prime}_{\lambda,\mu}=\left(  \tfrac{1}{2\pi\eta}\right)  ^{2n}%
\int_{\mathbb{R}^{2n}}e^{-\frac{i}{2\eta}\sigma(\mu-\lambda,u)}a(\tfrac{1}%
{4}(\lambda+\mu)+\tfrac{1}{2}u)\\
\times W_{\eta}\phi(-\tfrac{1}{4}(\lambda+\mu)+\tfrac{1}{2}u)du;
\end{multline*}
setting $z=\tfrac{1}{4}(\lambda+\mu)+\tfrac{1}{2}u$ we have $u=2z-\tfrac{1}%
{2}(\lambda+\mu)$ and hence%
\[
M^{\prime}_{\lambda,\mu}=\left(  \tfrac{1}{2\pi\eta}\right)  ^{2n}2^{n}%
\int_{\mathbb{R}^{2n}}e^{-\frac{i}{2\eta}\sigma(\mu-\lambda,2z-\tfrac{1}%
{2}(\lambda+\mu))}a(z)W_{\eta}\phi(z-\tfrac{1}{2}(\lambda+\mu))dz.
\]
Using the bilinearity and antisymmetry of the symplectic form $\sigma$ we have%
\[
\sigma(\mu-\lambda,2z-\tfrac{1}{2}(\lambda+\mu))=2\sigma(z,\lambda-\mu
)+\sigma(\lambda,\mu)
\]
so that%
\[
M^{\prime}_{\lambda,\mu}=\left(  \tfrac{1}{2\pi\eta}\right)  ^{2n}%
2^{n}e^{-\frac{i}{2\eta}\sigma(\lambda,\mu)}\int_{\mathbb{R}^{2n}}e^{-\frac
{i}{\eta}\sigma(z,\lambda-\mu)}a(z)W_{\eta}\phi(z-\tfrac{1}{2}(\lambda
+\mu))dz
\]
that is $M^{\prime}_{\lambda,\mu}=2^{n}(2\pi\eta)^{-2n}M_{\lambda,\mu}$, hence
our claim.
\end{proof}

\begin{remark}
Let us observe that Theorem \ref{ThmFabio} extends to other classes of
symbols, essentially with the same proof. For example the results hold for
$a\in M^{\infty,1}(\mathbb{R}^{2n})$ (the Sj\"{o}strand class) if the window
$\phi$ belongs to $M^{1}(\mathbb{R}^{n})$. Other choices are certainly possible.
\end{remark}

\subsection{The connection with the KLM conditions}

In what follows we prove Theorem \ref{Theorem2}, which shows that the KLM
conditions can be recaptured by an averaging procedure from the conditions in
Theorem \ref{ThmFabio}.

\begin{proof}
[Proof of Theorem \ref{Theorem2}]Let us first observe that we can write
\[
M_{\lambda,\mu}^{(KLM)}=(2\pi\eta)^{-n}e^{-\frac{i}{2\eta}\sigma(\lambda,\mu
)}V_{\Phi}a(\tfrac{1}{2}(\lambda+\mu),J(\mu-\lambda))
\]
where $\Phi(z)=1$ for all $z\in\mathbb{R}^{2n}$. Now we have
\[
W\phi_{\nu}(z)=\left(  \tfrac{1}{\pi\eta}\right)  ^{n}e^{-\frac{1}{\eta}%
|z-\nu|^{2}}%
\]
and therefore
\[
\int_{\mathbb{R}^{2n}}W\phi_{\nu}(z)\,d\nu=\left(  \tfrac{1}{\pi\eta}\right)
^{n}\int_{\mathbb{R}^{2n}}e^{-\frac{1}{\eta}|z-\nu|^{2}}d\nu=1=\Phi
(z)\quad\forall z\in\mathbb{R}^{2n}.
\]
Hence \eqref{formula3} follows by exchanging the integral with respect to
$\nu$ in \eqref{formula3} with the integral in the definition of the STFT in
\eqref{formula2}. Fubini's theorem can be applied because the function
\[
a(z)W\phi_{\nu}(z-\zeta)
\]
belongs to $L^{1}(\mathbb{R}^{2n}\times\mathbb{R}^{2n})$ with respect to
$z,\nu$, for every fixed $\zeta\in\mathbb{R}^{2n}$.
\end{proof}

\begin{corollary}
\label{coro1} Suppose $a\in L^{1}(\mathbb{R}^{n}) \cap L^{2}(\mathbb{R}^{n})$.
With the notation in Theorem \ref{Theorem2}, suppose that $\mathcal{G}%
(\phi_{0},\Lambda)$ is a Gabor frame for $L^{2}(\mathbb{R}^{n})$. If the
matrix $(M_{\lambda,\mu}^{\phi_{0}})_{\lambda,\mu\in\Lambda,|\lambda
|,|\mu|\leq N}$ is positive semidefinite for every $N$, then so is the matrix
$(M^{(KLM)}_{\lambda,\mu})_{\lambda,\mu\in\Lambda,|\lambda|,|\mu|\leq N}$.
\end{corollary}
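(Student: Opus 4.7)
The plan is to combine the averaging identity of Theorem \ref{Theorem2} with the window-independent nature of the positivity characterization in Theorem \ref{ThmFabio}. Since
\[
M^{(KLM)}_{\lambda,\mu}=(2\pi\eta)^{-n}\int_{\mathbb{R}^{2n}}M^{\phi_{\nu}}_{\lambda,\mu}\,d\nu,
\]
once one knows that for every $\nu\in\mathbb{R}^{2n}$ the truncated matrix $(M^{\phi_{\nu}}_{\lambda,\mu})_{|\lambda|,|\mu|\leq N}$ is positive semidefinite, the conclusion follows: for any complex coefficients $(c_{\lambda})_{|\lambda|\leq N}$ the quadratic form $\sum c_{\lambda}\overline{c_{\mu}}M^{\phi_{\nu}}_{\lambda,\mu}$ is nonnegative for every $\nu$, and integrating this over $\nu$ gives nonnegativity of the corresponding quadratic form for $(M^{(KLM)}_{\lambda,\mu})$.

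So the real task is to promote positivity from $\nu=0$ to all $\nu$. The key observation is that the matrix $(M^{\phi_{\nu}}_{\lambda,\mu})$ in Theorem \ref{Theorem2} is, up to a harmless positive constant, the twisted Gabor-coefficient matrix whose positivity is characterized by Theorem \ref{ThmFabio} when the window is taken to be $\phi_{\nu}$. Concretely, I would unwrap the STFT $V_{W\phi_{\nu}}a$ at the point $(\tfrac{1}{2}(\lambda+\mu),J(\mu-\lambda))$ using $J(\mu-\lambda)\cdot z=\sigma(z,\lambda-\mu)$ and the reality of $W_{\eta}\phi_{\nu}$; the resulting integral matches the twisted coefficient $a_{\lambda,\mu}$ from \eqref{amunu} with window $\phi_{\nu}$, up to a rescaling that affects neither positivity nor Hermitian symmetry.

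Granted this identification, Theorem \ref{ThmFabio}(i) converts the hypothesis at $\nu=0$ into the operator statement $\widehat{A}_{\eta}\geq 0$, since $\mathcal{G}(\phi_{0},\Lambda)$ is a Gabor frame by assumption. For any other $\nu$, the translated window $\phi_{\nu}=T(\nu)\phi_{0}$ still generates a Gabor frame $\mathcal{G}(\phi_{\nu},\Lambda)$ with the same frame bounds, because $T(\nu)$ is unitary and commutes with $T(\lambda)$ up to a unimodular phase that disappears inside $|\langle f,T(\lambda)\phi_{\nu}\rangle|^{2}$ in the frame inequalities \eqref{framedef}. Applying Theorem \ref{ThmFabio} in the reverse direction to $\mathcal{G}(\phi_{\nu},\Lambda)$ then transfers $\widehat{A}_{\eta}\geq 0$ back into positive semidefiniteness of $(M^{\phi_{\nu}}_{\lambda,\mu})_{|\lambda|,|\mu|\leq N}$ for every $\nu$ and every $N$, and the averaging formula above closes the argument.

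The main obstacle is the bookkeeping in the identification step: one must check that Theorem \ref{ThmFabio} genuinely applies verbatim to the window $\phi_{\nu}$ and that the resulting Gabor-coefficient PSD condition is exactly, up to a positive scalar, the PSD condition on $(M^{\phi_{\nu}}_{\lambda,\mu})$. A minor technical point is the exchange of the $\nu$-integral with the finite quadratic sum, which is legitimate by Fubini: under the hypothesis $a\in L^{1}\cap L^{2}$ and the Gaussian decay $W_{\eta}\phi_{\nu}(z)=(\pi\eta)^{-n}e^{-|z-\nu|^{2}/\eta}$, the integrand is dominated uniformly by an integrable function on $\mathbb{R}^{2n}\times\mathbb{R}^{2n}$.
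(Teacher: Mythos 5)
Your proposal is correct and follows essentially the same route as the paper: use the hypothesis together with Theorem \ref{ThmFabio} (noting that each translated window $\phi_{\nu}$ still generates a Gabor frame and that $M^{\phi_{\nu}}_{\lambda,\mu}$ coincides, up to a positive factor, with the twisted Gabor-coefficient matrix for that window) to pass from positivity at $\nu=0$ to the operator statement $\widehat{A}_{\eta}\geq 0$ and back to positive semidefiniteness for every $\nu$, then average via \eqref{formula3}. The paper's proof is just a terser version of this argument; your extra care with the STFT identification and the Fubini step fills in details the paper leaves implicit.
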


\begin{proof}
Observing that $\mathcal{G}(\phi_{\nu},\Lambda)$ is also a\ Gabor frame for
every $\nu\in\mathbb{R}^{2n}$, it follows from the assumptions and Theorem
\ref{ThmFabio} that the matrices $(M_{\lambda,\mu}^{\phi_{\nu}})_{\lambda
,\mu\in\Lambda,|\lambda|,|\mu|\leq N}$ are\ positive semidefinite for all
$\nu\in\mathbb{R}^{2n}$. The result therefore follows from \eqref{formula3}.
\end{proof}

\begin{corollary}
\label{coro2} Suppose $a\in L^{1}(\mathbb{R}^{n}) \cap L^{2}(\mathbb{R}^{n})$
and $\widehat{A}_{\eta}=\operatorname*{Op}_{\eta}^{\mathrm{W}}(a)\geq0$. Then,
with the notation in Theorem \ref{Theorem2}, for every finite subset
$S\subset\mathbb{R}^{2n}$ the matrix $(M^{(KLM)}_{\lambda,\mu})_{\lambda
,\mu\in S}$ is positive semidefinite (that is, the KLM conditions hold).
\end{corollary}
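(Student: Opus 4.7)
The plan is to combine the averaging identity of Theorem \ref{Theorem2} with a pointwise-in-$\nu$ application of the computation underlying Theorem \ref{ThmFabio}(i). By \eqref{formula3}, for every $\lambda,\mu\in\mathbb{R}^{2n}$,
\[
M^{(KLM)}_{\lambda,\mu}=(2\pi\eta)^{-n}\int_{\mathbb{R}^{2n}}M_{\lambda,\mu}^{\phi_{\nu}}\,d\nu,
\]
so if for each $\nu\in\mathbb{R}^{2n}$ the matrix $(M_{\lambda,\mu}^{\phi_{\nu}})_{\lambda,\mu\in S}$ is positive semidefinite, then the target matrix, being a $\nu$-average of positive semidefinite matrices, will itself be positive semidefinite.

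The heart of the argument is to verify this pointwise positivity. For fixed $\nu\in\mathbb{R}^{2n}$, finite $S\subset\mathbb{R}^{2n}$ and arbitrary complex coefficients $(c_{\lambda})_{\lambda\in S}$, I will set $\psi_{\nu}=\sum_{\lambda\in S}c_{\lambda}T(\lambda)\phi_{\nu}\in L^{2}(\mathbb{R}^{n})$ and reproduce the algebraic part of the proof of Theorem \ref{ThmFabio}(i): the sesquilinearity of $W_{\eta}$ combined with the displacement identity
\[
W_{\eta}(T(\lambda)\phi_{\nu},T(\mu)\phi_{\nu})(z)=e^{-\frac{i}{2\eta}\sigma(\lambda,\mu)}e^{-\frac{i}{\eta}\sigma(z,\lambda-\mu)}W_{\eta}\phi_{\nu}(z-\tfrac{1}{2}(\lambda+\mu))
\]
yields, up to a positive normalization constant $C_{\eta}$, the identity
\[
\sum_{\lambda,\mu\in S}c_{\lambda}\overline{c_{\mu}}\,M_{\lambda,\mu}^{\phi_{\nu}}=C_{\eta}\int_{\mathbb{R}^{2n}}a(z)W_{\eta}\psi_{\nu}(z)\,dz.
\]
The crucial observation is that this identity is purely algebraic---it never invokes a Gabor frame or the fact that $S$ lies in a lattice---so it applies verbatim to an arbitrary finite $S\subset\mathbb{R}^{2n}$. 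Since $\widehat{A}_{\eta}\geq0$, the right-hand side is nonnegative by \eqref{w1}, and hence so is the left-hand side.

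Integrating this scalar inequality in $\nu$ then gives
\[
\sum_{\lambda,\mu\in S}c_{\lambda}\overline{c_{\mu}}\,M_{\lambda,\mu}^{(KLM)}=(2\pi\eta)^{-n}\int_{\mathbb{R}^{2n}}\Bigl(\sum_{\lambda,\mu\in S}c_{\lambda}\overline{c_{\mu}}\,M_{\lambda,\mu}^{\phi_{\nu}}\Bigr)d\nu\geq0,
\]
which is the desired positive semidefiniteness of $(M_{\lambda,\mu}^{(KLM)})_{\lambda,\mu\in S}$; the interchange of the finite sum with the $d\nu$ integral is automatic.

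The main delicate point will be reconciling conventions: Theorem \ref{Theorem2} encodes $M_{\lambda,\mu}^{\phi_{\nu}}$ via the STFT $V_{W\phi_{\nu}}a$ of the standard Wigner $W\phi_{\nu}$ with the normalizations of \eqref{STFT}, whereas the Theorem \ref{ThmFabio}(i) computation naturally produces an integral involving $W_{\eta}\phi_{\nu}$ and the $\frac{1}{\eta}$-scaled symplectic phase. Translating between the two via the scaling \eqref{scale1} and the relation between Wigner transform and STFT already exploited in Theorem \ref{ThmFabio}(ii) is routine bookkeeping; the hypothesis $a\in L^{1}\cap L^{2}$ then guarantees absolute convergence of every integral involved and lets Fubini be applied without further analytic work.
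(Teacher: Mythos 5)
Your argument is correct, but it follows a genuinely different route from the paper's. The paper deduces Corollary \ref{coro2} from Corollary \ref{coro1}: it first gets positive semidefiniteness of $(M^{(KLM)}_{\lambda,\mu})$ restricted to sufficiently dense lattices $\Lambda$ (using that $\mathcal{G}(\phi_{0},\Lambda)$ is then a frame, Theorem \ref{ThmFabio}, and the averaging identity \eqref{formula3}), and then passes from lattice points to an arbitrary finite $S\subset\mathbb{R}^{2n}$ by a limiting argument that uses the density of such lattices together with the continuity of $a_{\sigma,\eta}$ coming from $a\in L^{1}$ via Riemann--Lebesgue. You instead observe that the quadratic-form identity $\sum_{\lambda,\mu\in S}c_{\lambda}\overline{c_{\mu}}\,M^{\phi_{\nu}}_{\lambda,\mu}=C_{\eta}\int a(z)W_{\eta}\psi_{\nu}(z)\,dz$ underlying the necessity half of Theorem \ref{ThmFabio}(i) is purely algebraic (finite sesquilinear expansion plus the displacement covariance of $W_{\eta}$) and never invokes the frame property or the lattice structure, so it holds for any finite $S$ and any $\nu$; nonnegativity then follows pointwise in $\nu$ from $\widehat{A}_{\eta}\geq0$ and \eqref{w1}, and averaging in $\nu$ via \eqref{formula3} finishes. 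This buys a real simplification: it dispenses with the dense-lattice and continuity-of-$a_{\sigma,\eta}$ limiting step entirely, it does not need the Lyubarskii--Seip-type fact that Gaussian Gabor systems on dense lattices are frames, and it makes transparent that the frame hypothesis is only relevant for the sufficiency direction of Theorem \ref{ThmFabio}. The one point you should write out rather than defer is the normalization: you need the constant $C_{\eta}$ relating $M^{\phi_{\nu}}_{\lambda,\mu}$ (defined through $V_{W\phi_{\nu}}a$ in \eqref{formula2}) to $\int a\,W_{\eta}\psi_{\nu}\,dz$ to be \emph{positive}, which does hold once the $\eta$-scaling of the frequency argument in the STFT is made consistent with \eqref{amunu} (the paper itself glosses over this in \eqref{astft}); the interchange of the finite sum with $\int d\nu$ is then justified by the same $L^{1}$ bound used in the proof of Theorem \ref{Theorem2}.
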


\begin{proof}
Since $\mathcal{G}(\phi_{0},\Lambda)$ is a frame for $L^{2}(\mathbb{R}^{n})$
for every sufficiently dense lattice $\Lambda$, as a consequence of Corollary
\ref{coro1} the matrix $(M^{(KLM)}_{\lambda,\mu})_{\lambda,\mu\in
\Lambda,|\lambda|,|\mu|\leq N}$ is positive semidefinite for all such lattices
$\Lambda$ and every integer $N$. By restricting the matrix to subspaces we see
that the submatrices $(M^{(KLM)}_{\lambda,\mu})_{\lambda,\mu\in S}$ are
positive semidefinite for every finite subset $S\subset\Lambda$. Since $a\in
L^{1}(\mathbb{R}^{n})$ the symplectic Fourier transform $a_{\sigma,\eta}$ is
continuous and therefore the same holds for every finite subset $S\subset
\mathbb{R}^{2n}$.
\end{proof}

\subsection{Almost positivity}

We now address the following question: suppose that $\mathcal{G}(\phi
,\Lambda)$ is a Gabor frame for $L^{2}(\mathbb{R}^{n})$ and assume that the
matrix $(M_{\lambda,\mu})_{\lambda,\mu\in\Lambda,|\lambda|,|\mu|\leq N}$ in
\eqref{lm1},\eqref{amunu} is positive semidefinite \textit{for a fixed $N$}.
What can we say about the positivity of the operator $\widehat{A}_{\eta}$?
Under suitable decay condition on the symbol $a$ it turns out that
$\widehat{A}_{\eta}$ is \textquotedblleft almost positive\textquotedblright%
\ in the following sense.

Let $\mathcal{G}(\phi,\Lambda)$ be a Gabor frame in $L^{2}(\mathbb{R}^{n})$,
with $\phi\in\mathcal{S}(\mathbb{R}^{n})$.

\begin{theorem}
Let $a\in M_{v_{s}}^{\infty,1}(\mathbb{R}^{2n})$ be real valued and $s\geq0$;
we use the notation $v_{s}(\zeta)=\langle\zeta\rangle^{s}$ for $\zeta
\in\mathbb{R}^{4n}$. Suppose that the matrix
\[
(M_{\lambda,\mu})_{\lambda,\mu\in\Lambda,|\lambda|,|\mu|\leq N}%
\]
in \eqref{lm1} is positive semidefinite for some integer $N$. Then there
exists a constant $C>0$ independent of $N$ such that
\[
(\widehat{A}_{\eta}\psi|\psi)_{L^{2}}\geq-CN^{-s}||\psi||_{L^{2}}^{2}%
\]
for all $\psi\in L^{2}(\mathbb{R}^{n})$.
\end{theorem}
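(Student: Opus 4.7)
The strategy is to expand $\psi$ in the Gabor frame and reduce the positivity question to a tail estimate which is controlled by the weighted Sj\"ostrand norm of $a$. Let $\psi\in L^{2}(\mathbb{R}^n)$ and choose a canonical dual window $\gamma$, so that $\psi=\sum_{\lambda\in\Lambda}c_\lambda T(\lambda)\phi$ with $c_\lambda=(\psi|T(\lambda)\gamma)_{L^2}$ and $\|c\|_{\ell^2}\lesssim\|\psi\|_{L^2}$. Invoking the identity \eqref{aggiunta} established in the proof of Theorem~\ref{ThmFabio} (extended to symbols in $M^{\infty,1}_{v_s}$ with Schwartz window, as noted in the remark following that theorem), we obtain
\[
(\widehat{A}_\eta\psi|\psi)_{L^2}=\sum_{\lambda,\mu\in\Lambda}c_\lambda\overline{c_\mu}M_{\lambda,\mu}=S_1+S_2,
\]
where $S_1$ is the \emph{block} sum over $|\lambda|,|\mu|\leq N$ and $S_2$ is the complementary \emph{tail} sum. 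The hypothesis gives $S_1\geq 0$, so it remains to prove $|S_2|\leq C N^{-s}\|\psi\|_{L^2}^{2}$ with $C$ independent of $N$.

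The key step is to identify $M_{\lambda,\mu}$, up to a unimodular factor, with a sample of the STFT of the symbol. Setting $G:=W_\eta\phi\in\mathcal{S}(\mathbb{R}^{2n})$, a direct computation from \eqref{amunu} gives
\[
|M_{\lambda,\mu}|=|V_{G}a(z_{\lambda,\mu},Z_{\lambda,\mu})|,\qquad z_{\lambda,\mu}=\tfrac{1}{2}(\lambda+\mu),\quad Z_{\lambda,\mu}=c_\eta J(\lambda-\mu),
\]
for a nonzero constant $c_\eta$. Because the weight $v_s(\zeta)=\langle\zeta\rangle^s$ is defined on the full phase space $\mathbb{R}^{4n}$ of the symbol, the function $\Psi(Z):=\sup_{z}|V_{G}a(z,Z)|\langle(z,Z)\rangle^{s}$ lies in $L^{1}(\mathbb{R}^{2n})$, with norm controlled by $\|a\|_{M^{\infty,1}_{v_s}}$. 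The elementary estimate $2\max(|\lambda|,|\mu|)^{2}\leq|\lambda+\mu|^{2}+|\lambda-\mu|^{2}$ yields $\langle(z_{\lambda,\mu},Z_{\lambda,\mu})\rangle\gtrsim\max(|\lambda|,|\mu|)$, so on the tail region $B_N=\{(\lambda,\mu):\max(|\lambda|,|\mu|)>N\}$ one obtains the crucial pointwise bound
\[
|M_{\lambda,\mu}|\leq\Psi(Z_{\lambda,\mu})\langle(z_{\lambda,\mu},Z_{\lambda,\mu})\rangle^{-s}\leq C N^{-s}\Psi(c_\eta J(\lambda-\mu)).
\]

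Substituting into $S_2$, rearranging by $\nu=\lambda-\mu$, and applying Cauchy--Schwarz to the inner sum gives
\[
|S_{2}|\leq C N^{-s}\sum_{\nu}\Psi(c_\eta J\nu)\sum_{\lambda}|c_\lambda||c_{\lambda-\nu}|\leq C N^{-s}\|c\|_{\ell^2}^{2}\sum_{\nu}\Psi(c_\eta J\nu).
\]
The remaining technical point---and the main obstacle---is to bound the discrete sum $\sum_\nu\Psi(c_\eta J\nu)$ by a constant multiple of $\|\Psi\|_{L^1}$. This is where one exploits the regularity of $V_{G}a$: since $a\in\mathcal{S}'(\mathbb{R}^{2n})$ and $G\in\mathcal{S}(\mathbb{R}^{2n})$, the STFT is continuous, and a standard amalgam-space argument shows that $\Psi$ belongs to $W(L^\infty,L^1)$, so its samples on a regular lattice are summable with total mass bounded by $C\|\Psi\|_{L^1}\lesssim\|a\|_{M^{\infty,1}_{v_s}}$. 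Combining these estimates yields $(\widehat{A}_\eta\psi|\psi)_{L^2}\geq S_1-|S_2|\geq-C N^{-s}\|\psi\|_{L^2}^{2}$, as required, with $C$ depending only on $a$, $\phi$, and the lattice $\Lambda$.
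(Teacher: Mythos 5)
Your proof is correct and follows essentially the same route as the paper's: both split the quadratic form $\sum_{\lambda,\mu}c_\lambda\overline{c_\mu}M_{\lambda,\mu}$ into the nonnegative block $|\lambda|,|\mu|\leq N$ plus a tail, identify $|M_{\lambda,\mu}|$ as a sample of $|V_{W_\eta\phi}a|$ at a point of size $\gtrsim\max(|\lambda|,|\mu|)$, and use the weighted summability $\sum_{\nu}\sup_z(1+|z|+|\nu|)^s|V_{W_\eta\phi}a(z,\nu)|<\infty$ guaranteed by $a\in M^{\infty,1}_{v_s}$ (Gr\"ochenig, Theorem 12.2.1) to extract the factor $N^{-s}$ and control the tail by a convolution-dominated (Schur/Young) bound. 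The only difference is organizational: the paper splits the vector $\psi=\psi'+\psi''$ and estimates $\|\widehat{A}_\eta\psi''\|_{L^2}$ via the frame inequality, whereas you estimate the tail of the bilinear form directly, which is equivalent.
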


\begin{proof}
Let $\psi\in L^{2}(\mathbb{R}^{n})$, and write its Gabor frame expansion as
\[
\psi=\sum\nolimits_{\lambda\in\Lambda,|\lambda|\leq N}c_{\lambda}%
T(\lambda)\phi+\sum\nolimits_{\lambda\in\Lambda,|\lambda|>N}c_{\lambda
}T(\lambda)\phi;
\]
denoting the sums in the right-hand side by, respectively, $\psi^{\prime}$ and
$\psi^{\prime\prime}$ we get%
\[
(\widehat{A}_{\eta}\psi|\psi)_{L^{2}}=(\widehat{A}_{\eta}\psi^{\prime}%
|\psi^{\prime})_{L^{2}}+(\widehat{A}_{\eta}\psi^{\prime}|\psi^{\prime\prime
})_{L^{2}}+(\widehat{A}_{\eta}\psi^{\prime\prime}|\psi^{\prime})_{L^{2}%
}+(\widehat{A}_{\eta}\psi^{\prime\prime}|\psi^{\prime\prime})_{L^{2}}.
\]
We have, by \eqref{aggiunta} and the positivity assumption
\[
(\widehat{A}_{\eta}\psi^{\prime}|\psi^{\prime})_{L^{2}}=\sum\nolimits_{\lambda
,\mu\in\Lambda,|\lambda|,|\mu|\leq N}c_{\lambda}\overline{c_{\mu}}%
M_{\lambda,\mu}\geq0
\]
hence it is sufficient to show that%
\begin{equation}
|(\widehat{A}_{\eta}\psi^{\prime}|\psi^{\prime\prime})_{L^{2}}|\leq
CN^{-s}||\psi||_{L^{2}}^{2}\label{5}%
\end{equation}
and similar inequalities for the other terms. Now%
\begin{equation}
|(\widehat{A}_{\eta}\psi^{\prime}|\psi^{\prime\prime})_{L^{2}}|=|(\psi
^{\prime}|\widehat{A}_{\eta}\psi^{\prime\prime})_{L^{2}}|\leq||\psi^{\prime
}||_{L^{2}}||\widehat{A}_{\eta}\psi^{\prime\prime}||_{L^{2}}.\label{6}%
\end{equation}
Observe that the function $\psi^{\prime\prime}$ has a Gabor expansion with
coefficients $c_{\lambda}=0$ for $|\lambda|\leq N$. By the frame property and
the same computation as in the proof of \eqref{aggiunta} we have
\begin{align*}
||\widehat{A}_{\eta}\psi^{\prime\prime}||_{L^{2}} &  \asymp\Vert
(\widehat{A}_{\eta}\psi^{\prime\prime}|T(\mu)\phi)_{L^{2}}\Vert_{\ell^{2}}\\
&  =\Vert{\textstyle\sum_{\lambda\in\Lambda,|\lambda|>N}}M_{\lambda,\mu
}c_{\lambda}\Vert_{\ell^{2}}.
\end{align*}
Observe that \eqref{amunu} can be rewritten in terms of the short-time
Fourier transform (STFT) on $\mathbb{R}^{2n}$ as%
\begin{equation}
a_{\lambda,\mu}=V_{W_{\eta}\phi}a(\tfrac{1}{2}(\lambda+\mu),J(\mu-\lambda)).
\label{astft}%
\end{equation}

Now, from \eqref{lm1} and \eqref{astft} we have
\[
|M_{\lambda,\mu}|=|V_{W_{\eta}\phi}a(\tfrac{1}{2}(\lambda+\mu),J(\mu
-\lambda))|.
\]
In view of the assumption $a\in M_{v_{s}}^{\infty,1}$ we have, by
\cite[Theorem 12.2.1]{Gro}, that
\[
\sum_{\nu\in\Lambda^{\prime}}\sup_{z\in\mathbb{R}^{2n}}(1+|z|+|\nu
|)^{s}|V_{W_{\eta}\phi}a(z,\nu)|<\infty
\]
for every lattice $\Lambda^{\prime}\subset\mathbb{R}^{2n}$. Now we apply this
formula with $\Lambda^{\prime}=J(\Lambda)$; using
\[
1+|\lambda|+|\mu|\asymp1+\frac{1}{2}|\lambda+\mu|+|J(\mu-\lambda)|
\]
we obtain, for $|\lambda|>N$
\begin{align*}
N^{s}|M_{\lambda,\mu}| &  \leq C\big(1+\frac{1}{2}|\lambda+\mu|+|J(\mu
-\lambda)|\big)^{s}|V_{W_{\eta}\phi}a(\tfrac{1}{2}(\lambda+\mu),J(\mu
-\lambda))|\\
&  \leq H(\mu-\lambda)
\end{align*}
for some $H\in\ell^{1}(\Lambda)$. By Schur's test we can continue the above
estimate as
\[
\Vert{\textstyle\sum_{\lambda\in\Lambda,|\lambda|>N}}M_{\lambda,\mu}%
c_{\lambda}\Vert_{\ell^{2}}\leq CN^{-s}\Vert c_{\lambda}\Vert_{\ell^{2}},
\]
which combined with (\ref{6}) gives (\ref{5}), because
\[
\Vert c_{\lambda}\Vert_{\ell^{2}}\asymp||\psi^{\prime\prime}||_{L^{2}}\leq
C^{\prime}||\psi||_{L^{2}}%
\]
and $||\psi^{\prime}||_{L^{2}}\leq C^{\prime\prime}||\psi||_{L^{2}}$.
\end{proof}

%\newline

\section*{Acknowledgements}

M. de Gosson has been funded by the Grant P27773 of the Austrian Research
Foundation FWF. E. Cordero and F. Nicola were partially supported by the
Gruppo Nazionale per l'Analisi Matematica, la Probabilit\`a e le loro
Applicazioni (GNAMPA) of the Istituto Nazionale di Alta Matematica (INdAM).

\end{document}